\newtheorem{theorem}{Theorem}[section]
\newtheorem{lemma}[theorem]{Lemma}
\theoremstyle{definition}
\theoremstyle{remark}
\numberwithin{equation}{section}
\newcommand{\x}{\textbf{x}}
\newcommand{\y}{\textbf{y}}
\newcommand{\n}{\textbf{n}}
\begin{document}
\title{\bf Resonance and Double Negative Behavior in Metamaterials\footnote{\normalsize Accepted for publication in Archive for Rational Mechanics and Analysis on
March 4, 2013}} 


\author{Yue Chen \\ {\normalsize Department of Mathematics} \\
{\normalsize University of Kentucky}\\
{\normalsize Lexington, KY 40506, USA.}\\ {\normalsize email: chenyue0715\symbol{'100}uky.edu}\\[3pt]
\and Robert Lipton \\{\normalsize Department of Mathematics}\\{\normalsize  Louisiana State University}\\
{\normalsize Baton Rouge, LA 70803, USA.}\\ {\normalsize email: lipton\symbol{'100}math.lsu.edu}\\[3pt]}




\maketitle







\begin{abstract}

A generic class of metamaterials is introduced and is shown to exhibit frequency dependent double negative effective properties. We develop a rigorous method for calculating the frequency intervals where either double negative or double positive effective properties appear and show how these intervals imply the existence of propagating Bloch waves inside sub-wavelength structures.  The  branches of the dispersion relation associated with Bloch modes are shown to be explicitly determined by the Dirichlet spectrum of the high dielectric phase and the generalized electrostatic spectra of the complement.
\end{abstract}

\section{Introduction}
\label{Introduction}
Metamaterials are new class of engineered materials that exhibit electromagnetic properties not readily found in nature. The novelty is that unconventional electromagnetic properties  can be created by carefully chosen sub-wavelength configurations of conventional  materials.   The distinctive properties of metamaterials are derived from geometrically induced resonances localized to specific frequencies. These resonances are used to control propagating modes with wavelengths longer than  the characteristic length scale of the material. Metamaterials  are envisaged for several application areas ranging from telecommunication and solar energy harvesting to the electromagnetic cloaking
of material objects. 

A generic metamaterial comes most often in the form of a crystal made from a periodic array of scatterers embedded within a host medium. The physical notions of frequency dependent effective magnetic permeability and dielectric permittivity are  used to describe the behavior of propagating modes at wavelengths larger than the length scale of the metamaterial crystal. The past decade has witnessed the development and identification of new sub-wavelength geometries for novel metamaterial properties. These include the simultaneous appearance of negative effective dielectric permittivity and magnetic permeability. Such ``left handed media'' are predicted to exhibit negative group velocity, inverse Doppler  effect, and an inverted Snell's law \cite{Veselago}.  The first metamaterial configurations  imparted electromagnetic properties consistent with the appearance of a negative bulk dielectric constant \cite{Pendry1998}. Subsequently electromagnetic behavior associated with negative effective magnetic permeability
at microwave frequencies were derived from periodic arrays of non-magnetic metallic split ring resonators \cite{PendryHolden}. 
Double negative or left handed metamaterials with simultaneous negative bulk permeability and permittivity at microwave frequencies have been verified for arrays of metallic posts and split ring resonators \cite{Smith}.  Subsequent work has delivered several new designs using 
different configurations of metallic resonators for double negative behavior \cite{23,2A,17,21,18,20,22}. 

Current state of the art metallic resonators do not perform well at optical frequencies  and 
alternate strategies are contemplated employing the use of both metals and dielectric materials for optical frequencies \cite{Shalaev}.
For higher frequencies in the infrared and optical range new strategies for generating double negative response rely on Mie resonances 
generated inside  coated rods consisting of a high  dielectric core coated with a dielectric exhibiting  plasmonic or Drude type frequency response at optical frequencies \cite{11,Yannopappas,Yanno2}. A second strategy for  generating double negative response employs dielectric resonances
associated with small rods or particles made from dielectric materials with large  permittivity, \cite{Plasmon,LPeng,VinkFelbacq}.  Alternate strategies for generating negative permeability at infrared and optical frequencies  use special configurations of plasmonic nanoparticles \cite{6A,7A,shevts}. The list of  metamaterial systems is rapidly growing  and comprehensive reviews of the subject can be found in \cite{Service} and \cite{Shalaev}.

Despite the large number of physically based strategies for generating unconventional properties the theory lacks mathematical frameworks that: 
\begin{enumerate}
\item Provide the explicit relationship connecting the leading order influence of  double negative  behavior to the existence of Bloch wave modes inside metamaterials.
\item Provide a systematic identification of the underlying spectral problems related to the crystal geometry that control the location of stop bands and propagation bands for metamaterial crystals.
\end{enumerate}

In this article we provide such a framework for a generic class of metamaterial crystals made from non-magnetic constituents. The crystal is given by a periodic array of two aligned non-magnetic rods; one of which possesses a large frequency independent dielectric constant while the other is characterized by a frequency dependent dielectric response.  In this treatment the frequency dependent dielectric response $\epsilon_P$ is associated with  plasmonic or Drude  behavior at optical frequencies given by \cite{11,Yannopappas}
\begin{eqnarray} 
\epsilon_P(\omega^2)=1-\frac{\omega_p^2}{\omega^2},
\label{singleosc}
\end{eqnarray} 
where $\omega$ is the frequency and $\omega_p$ is the plasma frequency \cite{bohren}. 
Here we develop a rigorous method for calculating the frequency intervals where either double negative or double positive bulk properties appear and show how these intervals imply the existence of Bloch wave modes in the dynamic regime away from the quasi static limit, see Theorems \ref{summable} and \ref{bands}. It is shown that 
these frequency intervals are explicitly determined by two distinct spectra. These are the Dirichlet spectrum of the Laplacian associated with the high dielectric rod and the electrostatic spectrum of a three phase high contrast medium obtained by sending the dielectric constant inside the high dielectric rod to $\infty$. The electrostatic spectra is introduced in Theorem \ref{completeeigen} and discussed in section \ref{genrealizedelectrostaticspectra}. The methods are illustrated  for  $\epsilon_P$ given by \eqref{singleosc}, however they apply to dielectrics characterized by single oscillator or multiple oscillator models that include dissipation and are of the form
\begin{eqnarray} 
\epsilon_P(\omega)=1+\sum_{j=1}^N\frac{\omega_{p}^2}{\omega^2_j-\omega^2-i\gamma_j\omega},
\label{multleosc}
\end{eqnarray} 
where $\omega_j$ are resonant frequencies, and $\gamma_j$ are  damping factors. 

We start with a metamaterial crystal characterized  by a period cell containing  two parallel infinitely long cylindrical rods. The rods are parallel to the $x_3$ axis and are periodically arranged within a square lattice over the transverse $\mathbf{x}=(x_1,x_2)$ plane. The period of the lattice is denoted by $d$. There is no constraint placed on the shape of the rod cross sections other than they have smooth boundaries and are simply connected.
The objective is to characterize the branches of the dispersion relation for for H-polarized Bloch-waves inside the crystal.   For this case the magnetic field is aligned with the rods and the electric field lies in the transverse plane. The direction of propagation is described by the unit vector $\hat{\kappa}=(\kappa_1,\kappa_2)$ and $k=2\pi/\lambda$ is the wave number for a wave of length~$\lambda$ and the fields are of the form
\begin{eqnarray}
H_3=H_3(\mathbf{x})e^{i(k\hat{\kappa}\cdot\mathbf{x}-t\omega )},\,\,E_1=E_1(\mathbf{x})e^{i(k\hat{\kappa}\cdot\mathbf{x}-t\omega)},\,\,
E_2=E_2(\mathbf{x})e^{i(k\hat{\kappa}\cdot\mathbf{x}-t\omega)} \label{em3}
\end{eqnarray}
where $H_3(\mathbf{x})$, $E_1(\mathbf{x})$, and $E_2(\mathbf{x})$ are  $d$-periodic for $\mathbf{x}$ in $\mathbb{R}^2$. 
In the sequel $c$ will denote the speed of light in free space. 
We denote the unit vector pointing along the $x_3$ direction by ${\bf e}_3$, and the periodic dielectric permittivity and magnetic permeability are denoted by $a_d$ and $\mu$ respectively. The electric field component $\mathbf{E}=(E_1,E_2)$ of the wave is determined by $${\bf E}=-\frac{ic}{\omega a_d}{\bf e}_3\times \nabla H_3.$$

The materials are assumed non-magnetic hence the magnetic permeability $\mu$ is set to unity inside the rods and host. 
The oscillating dielectric permittivity for the crystal 
is a $d$  periodic function in the transverse plane and
is described by $a_d=a_d(\mathbf{x}/d)$ where $a_d(\mathbf{y})$ is 
the unit periodic dielectric function taking the values
\begin{equation}
a_d(\y)=
\begin{cases}
\epsilon_H &\text{ in the host material},\\
\epsilon_P=\epsilon_P(\omega) & \text{ in the frequency dependent ``plasmonic'' rod},\\
\epsilon_R=\epsilon_r/d^2 &\text{ in the high dielectric rod} .
\end{cases}
\end{equation}
\par
This choice of high dielectric constant $\epsilon_R$ follows that of \cite{felbacqbouchette} where $\epsilon_r$ has dimensions of area. 
Setting $h^d(\mathbf{x})=H_3(\mathbf{x})e^{i(k\hat{\kappa}\cdot\mathbf{x})}$ the Maxwell equations take the form of the Helmholtz equation given by
\begin{equation}
-\nabla_{\mathbf{x}} \cdot \left(a_d^{-1}(\frac{\mathbf{x}}{d})\nabla_{\mathbf{x}} h^d(\mathbf{x})\right)=\frac{\omega^2}{c^2}h^d ~~~\text{ in  } \mathbb{R}^2.
\label{Helmholtzr2}
\end{equation}
The band structure is given by the Bloch eigenvalues $\frac{\omega^2}{c^2}$ which is a subset of the parameter space $\{\frac{\omega^2}{c^2} , -2\pi\leq k_1\leq 2\pi , -2\pi\leq k_2\leq 2\pi\}$, with $k=(k_1^2+k_2^2)^{1/2}$ and $\hat{\kappa}_i=k_i/k$. This constitutes the first Brillouin Zone for this problem.  

We set $\x=d\y$ for $\y$ inside the unit period $Y=[0,1]^2$, put  $\beta=dk\hat{\kappa}$ and write $u(\y)=H_3(d\y)$. The dependent variable is written $u^d(\y)=h^d(d\y)=u(\y)\exp^{i\beta\cdot\y}$,  and we recover the equivalent problem over the unit period cell given by
\begin{equation}
-\nabla_{\y} \cdot \left(a_d^{-1}(\y)\nabla_{\y} u^d\right)=\frac{d^2\omega^2}{c^2}u^d ~~~\text{ in  } Y.
\label{Helmholtzd}
\end{equation}

To proceed we work with the dimensionless ratio  $\rho=d/\sqrt{\epsilon_r}$, wave number $\tau=\sqrt{\epsilon_r}k$ and square frequency $\xi=\epsilon_r\frac{\omega^2}{c^2}$. The dimensionless parameter  measuring the departure away from the quasi static regime is given by the  ratio of period size to wavelength $\eta=dk=\rho\tau\geq 0$. The regime $\eta>0$ describes dynamic wave propagation while the infinite wavelength or quasi static limit is recovered for $\eta=0$. Metamaterials by definition are structured materials operating in the sub-wavelength regime $0<\eta<1$ away from the quasistatic limit \cite{PendryHolden}. For these parameters the dielectric permittivity takes the values  $\epsilon_P=1-\frac{\epsilon_r\omega_p^2/c^2}{\xi}$, $\epsilon_R=\frac{1}{\rho^2}$, $\epsilon_H=1$,  and  is denoted by $a_\rho(\y)$ for $\y$ in $Y$ and \eqref{Helmholtzd} is given by
\begin{equation}
-\nabla_{\y} \cdot \left(a_\rho^{-1}(\y)\nabla_{\y} u^d(\y)\right)=\rho^2\xi u^d(\y) ~~~\text{ in  } Y.
\label{Helmholtz}
\end{equation}
The unit period cell for the generic metamaterial system is represented in Fig. \ref{unitcell}. In what follows $R$ represents the rod cross section containing high dielectric material, $P$ the cross section containing the plasmonic material and $H$ denotes the connected host material region.

\par
\begin{figure}[h]
\centering
 \psfrag{n1}{$\n$}
\psfrag{H}{$H$}
\psfrag{P}{$P$}
\psfrag{R}{$R$}
\epsfig{figure=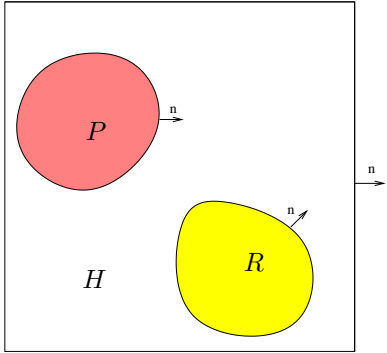 , width=2in}
\caption{Cross section of unit cell.}
\label{unitcell}
\end{figure}

It is shown in Theorem \ref{convergenceofxi} that the band structure for the metamaterial is characterized by a power series in $\eta$ and is governed by two distinct types of spectra determined by the shape and configuration of the rods inside the period cells. The series delivers the explicit relationship connecting the leading order influence of quasi static behavior, as mediated by {\em effective magnetic permeability and dielectric permittivity}, to the propagation of Bloch wave solutions inside metamaterial crystals made from sub-wavelength $1>\eta>0$ structures, see Theorems \ref{convergenceofxi}, \ref{summable}, and \ref{bands}.

The relevant spectra for this problem is found to be given by the Dirichlet spectra for the Laplacian over the rod cross sections $R$ together with a generalized electrostatic spectra associated with the infinite connected region exterior to the rods. These spectra provide two distinct criteria that taken together are sufficient for the existence of power series solutions see, Theorem \ref{summable}. In what follows the power series is developed in terms of a hierarchy of boundary value problems posed separately over the domain $R$ and the domain exterior to the high dielectric rod $Y\setminus R$. The existence of solutions for the boundary value problems inside the high dielectric rod $R$ is controlled by the Dirichlet spectra see, section \ref{higherorder}.  Existence of solutions for boundary value problems exterior to $R$ are determined by the generalized electrostatic spectrum see, Theorems \ref{completeeigen} and \ref{existencetheoremforoutsideR}. 
The electrostatic spectra is identified and is shown to be given by the eigenvalues of a compact operator acting on an appropriate Sobolev space of periodic functions see,  the discussion in section \ref{genrealizedelectrostaticspectra} and Theorem \ref{T}.  

The generic class of double negative metamaterials introduced here  appears to be new. The motivation behind their construction draws from earlier investigations. The influence of electrostatic resonances on the effective dielectric tensor associated with crystals made from a single frequency dependent dielectric inclusion is developed in the pioneering work of \cite{kantorbergman,McPhedranMilton,MiltonBook} see also the more recent work \cite{shevts} in the context of matematerials. These resonances are responsible for negative effective dielectric permittivity. In this context we point out that the sub-wavelength geometry introduced here is a three phase medium  and the categorization of all possible electrostatic resonances requires a different approach see section \ref{genrealizedelectrostaticspectra}.  On the other hand Dirichlet resonances  generate  negative effective permeability inside high contrast non-dispersive dielectric inclusions, this phenomena is discovered  in   \cite{felbacqbouchette,bouchettefelbacq,9A,3A,ObrienPendry}, see also the mathematically related investigations of \cite{Cherdansev,HempelandLenau,Smyshalaev,zhikov}. Motivated by these observations we have constructed a hybrid composite crystal that combines both high dielectric inclusions and frequency dependent inclusions for generating double negative response from non-magnetic materials. 

The power series approach to sub-wavelength $\eta<1$ analysis has been  developed in \cite{FLS} for characterizing the  dynamic dispersion relations for Bloch waves inside plasmonic crystals. It has also been applied to assess the influence of effective negative permeability on the propagation of Bloch waves inside high contrast dielectrics \cite{FLS2},  the generation of negative permeability inside metallic - dielectric resonators \cite{shipman}, and for concentric coated cylinder assemblages generating a double negative media \cite{chenlipton}.

We conclude noting that  earlier related work introduces the use of high contrast structures for opening band gaps in photonic crystals, this  is developed in \cite{FK1,FK2,FK3}. For two phase high contrast media integral equation methods are applied to recover  dispersion relations about frequencies corresponding to Dirichlet eigenvalues \cite{amari1} and \cite{amari2}.  The connection between high contrast interfaces and negative effective magnetic permeability for time harmonic waves is made in \cite{KohnShipman}.   More recently 
two-scale homogenization theory has been developed for three dimensional split ring structures that deliver negative effective magnetic permeability \cite{bouchetteschwizer,4A}. For periodic arrays made from metal fibers a homogenization theory delivering negative effective dielectric constant \cite{bouchettebourel} is established. A novel method for creating metamaterials with prescribed effective dielectric permittivity and effective magnetic permeability at a fixed frequency
is developed in  \cite{Milton2}.

\section{Background, basic theory, and generalized electrostatic resonances}
\label{Background}
In this section we outline the steps in the power series development of Bloch waves and establish the mathematical foundations for establishing existence of power series solutions.
We introduce the function space $H^1_{per}(Y)$ defined to be all $Y$-periodic, complex valued square integrable functions with square integrable derivatives with the usual inner product and norm given by 
\begin{eqnarray}
( u, v)_{Y}=\int_Y\left(\nabla u\cdot\nabla \overline{v}+u\overline{v}\right)\,dy \hbox{  and  }\Vert u\Vert_Y=(u,u)_Y^{1/2}.
\label{innerY}
\end{eqnarray}
We also introduce the space $H^1_{per}(Y\setminus R)$ given by all $Y$-periodic, complex valued square integrable functions with square integrable derivatives and the  inner product  and seminorm
\begin{eqnarray}
( u,v)=\int_{Y\setminus R}\nabla u\cdot\nabla \overline{v}\,dy  \hbox{  and  }\Vert u\Vert=(u,u)^{1/2}.
\label{innerproduct}
\end{eqnarray}
The variational form of (\ref{Helmholtz})  is given by 
\begin{equation}
\int_Y a_\rho^{-1}\nabla u^d\cdot \nabla \bar{\tilde{v}}= \int_Y \frac{\rho^2\xi}{c^2}u^d\bar{\tilde{v}}
\label{variational}
\end{equation}
for any $\tilde{v}=v(\y)e^{i\hat{\kappa}\cdot \tau\rho \y}$ , where $v \in H ^1_{per}(Y)$ . On writing $u^d=u(\y)e^{i\hat{\kappa}\cdot \tau\rho \y}$ and setting $\eta=\tau\rho$ we transform (\ref{variational}) into
\begin{eqnarray}
&&\int_H\tau^2(\xi-\epsilon_r\frac{\omega_p^2}{c^2})(\nabla+i\eta \hat{\kappa})u\cdot \overline{(\nabla+i\eta \hat{\kappa})v }+\int_P\tau^2\xi(\nabla+i\eta \hat{\kappa})u\cdot \overline{(\nabla+i\eta \hat{\kappa})v }\nonumber
\\&&+\int_R\eta^2(\xi-\epsilon_r\frac{\omega_p^2}{c^2})(\nabla+i\eta \hat{\kappa})u\cdot \overline{(\nabla+i\eta \hat{\kappa})v }=\int_Y\eta^2\xi(\xi-\epsilon_r\frac{\omega_p^2}{c^2})u\overline{v}
\label{variational2}
\end{eqnarray}
We introduce the power series
\begin{eqnarray}
 &&u=\sum_{m=0}^\infty \eta^m u_m \label{upower}
\\&&\xi=\sum_{m=0}^\infty\eta^m\xi_m \label{xipowerexpansion}
\end{eqnarray}
where $u_m$ belongs to $H^1_{per}(Y)$. In view of the algebra it is convenient to write $u_m=i^m\underline{u}_0\psi_m$ where $\underline{u}_0$ is an arbitrary constant factor. 

We now describe the underlying variational structure associated with the power series solution. Set $$z=\epsilon_P^{-1}(\xi_0)=\left(1-\frac{\epsilon_r\omega_p^2/c^2}{\xi_0}\right)^{-1}$$
and for $u$, $v$ belonging to $H^1_{per}(Y)$ we introduce the sesquilinear form
\begin{eqnarray}
B_z(u,v)=\int_{H}\nabla u\cdot\nabla\overline{v}\,d\y+\int_P\,z\,\nabla u\cdot\nabla\overline{v}\,d\y.
\label{bilinear}
\end{eqnarray}
Here $Y\setminus R=H\cup P$ and the form $B_z(u,v)$ is well defined for functions in $H^1_{per}(Y\setminus R)$.
Substitution of the series 
into the system (\ref{variational2}) and equating like powers of $\eta$ produce the infinite set of coupled equations for $m=0,1,2\ldots$ given by
\begin{eqnarray}
&&\tau^2 B_z(\psi_m,v)+\xi_0^{-1}\epsilon_p^{-1}(\xi_0)\tau^2\int_{Y\setminus R}\big[\sum^{m-1}_{l=1}(-i)^l\xi_l\nabla \psi_{m-l}\cdot \nabla \overline{ v}\nonumber \\&&+\hat{\kappa}\cdot\sum_{l=0}^{m-1}(-i)^l\xi_l(\psi_{m-1-l}\nabla \overline{v}-\nabla \psi_{m-1-l}\overline{v})-\sum_{l=0}^{m-2}(-i)^l\xi_l\psi_{m-2-l}\overline{v}\big]\nonumber \\&&-\xi_0^{-1}\epsilon_p^{-1}(\xi_0)\tau^2\epsilon_r\frac{\omega_p^2}{c^2}\int_H\big[\hat{\kappa}\cdot(\psi_{m-1}\nabla \overline{v}-\nabla \psi_{m-1}\overline{v})-\psi_{m-2}\overline{v}\big]\nonumber
\\&&-\xi_0^{-1}\epsilon_p^{-1}(\xi_0)\int_R\big[\sum^{m-2}_{l=0}(-i)^l\xi_l\nabla \psi_{m-2-l}\cdot \nabla \overline{ v}\nonumber \\&&+\hat{\kappa}\sum_{l=0}^{m-3}(-i)^l\xi_l(\psi_{m-3-l}\nabla \overline{v}-\nabla \psi_{m-3-l}\overline{v})
-\sum_{l=0}^{m-4}(-i)^l\xi_l\psi_{m-4-l}\overline{v}\big]\nonumber
\\&&+\xi_0^{-1}\epsilon_p^{-1}(\xi_0)\int_R\epsilon_r\frac{\omega_p^2}{c^2}\big[\nabla \psi_{m-2}\cdot \nabla \overline{ v}+\hat{\kappa}(\psi_{m-3}\nabla \overline{v}-\nabla \psi_{m-3}\overline{v})+\psi_{m-4}\overline{v}\big]\nonumber
\\&&-\xi_0^{-1}\epsilon_p^{-1}(\xi_0)\int_Y\big[\sum_{l=0}^{m-2}\sum_{n=0}^l \xi_{m-2-l}\xi_n \psi_{l-n} i^{l-n-m}\overline{ v}\nonumber
\\&&+\epsilon_r\frac{\omega_p^2}{c^2}\sum_{l=0}^{m-2}(-i)^l\xi_l \psi_{m-2-l}\overline{ v}\big]
=0,\quad\quad\hbox{ for all $v$ in $H^1_{per}(Y)$}.
\label{summation2}
\end{eqnarray}
Here the convention is $\psi_m=0$ for $m<0$.

The determination of  $\{\psi_m\}_{m=0}^\infty$ proceeds iteratively. We start by determining $\psi_0$ on $Y\setminus R$, this function is used as boundary data to determine $\psi_0$ in $R$ from which we determine $\psi_1$ on $Y\setminus R$ and the full sequence is determined on iterating this cycle.  The elements $\xi_m$ are recovered from solvability conditions obtained by setting $v=1$ in \eqref{summation2} and proceeding iteratively. The complete algorithm together with explicit boundary value problems necessary for the determination of the sequences $\{\psi_m\}_{m=0}^\infty$, $\{\xi_m\}_{m=0}^\infty$ is described in section \ref{higherorder} and Theorem \ref{Existencesequence}. 

The existence theory for the solution of the sequence of boundary value problems is based on the sesquilinear form $B_z(u,v)$ defined for functions $u$ and $v$ belonging to $H^1_{per}(Y\setminus R)/\mathbb{C}$. Here   $H^1_{per}(Y\setminus R)/\mathbb{C}$ is the subspace of functions $u$ belonging to  $H_{per}^1(Y\setminus R)$ with zero mean $\int_{Y\setminus R} u\,dy=0$. This space is a Hilbert space with  inner product \eqref{innerproduct}.  Although in this treatment $\epsilon_P$ is given by \eqref{singleosc} we are motivated by the general case \eqref{multleosc} and proceed in full generality allowing for the possibility that $z$ can lie anywhere on the complex plane $\mathbb{C}$ including the negative real axis. Thus for each $z$ in $\mathbb{C}$ we are required to characterize the range of the map $u \mapsto B_z(u,\cdot)$ viewed as a linear transformation $T_z$ mapping $u$ into the space of bounded skew linear functionals on $H^1_{per}(Y\setminus R)/\mathbb{C}$. This  is linked to the following eigenvalue problem  characterizing all pairs $\lambda$ in $\mathbb{C}$, $\psi$ in $H^1_{per}(Y\setminus R)/\mathbb{C}$ that solve
\begin{eqnarray}
-\frac{1}{2}\int_P\nabla\psi\cdot\nabla \overline{v}\,d\y+\frac{1}{2}\int_H\nabla\psi\cdot\nabla\overline{v}\,d\y=(\lambda\psi,v),
\label{resonance}
\end{eqnarray}
for every $v$ in $H^1_{per}(Y\setminus R)/\mathbb{C}$.
Inspection shows that for $z=(\lambda+1/2)/(\lambda-1/2)$ that $B_z(\psi,v)=0$, for every $v$ in $H^1_{per}(Y\setminus R)/\mathbb{C}$. In other words  the kernel of the operator $T_z$ is nonempty for $z=(\lambda+1/2)/(\lambda-1/2)$. 
The eigenvalues $\lambda$ will be referred to as generalized electrostatic resonances. In what follows we show that $T_z$ is a one to one and onto  map from $H_{per}^1(Y\setminus R)/\mathbb{C}$ into the dual space provided that $z\not=(\lambda+1/2)/(\lambda-1/2)$.

The generalized electrostatic resonances are characterized by introducing a suitable orthogonal  decomposition of $H^1_{per}(Y\setminus R)/\mathbb{C}$.
We introduce the subspace $H^1_0(P)$ given by the closure in the $H^1$ norm of smooth functions $v$ with compact support on $P$ and  the subspace $H^1_{0,per}(H)$ given by the closure in the $H^1$ norm of all periodic continuously differentiable functions with support outside $P$. Extending elements of $H^1_{0,per}(H)$ by zero to $Y\setminus R$ delivers  $W_1\subset H^1_{per}(Y\setminus R)$, extending elements of $H^1_0(P)$ by zero to $Y\setminus R$ delvers $W_2\subset  H^1_{per}(Y\setminus R)$. Define $W_3$ to be all functions  $w$ in $H^1_{per}(Y\setminus R)$ for which the boundary integral $\int_{\partial P}\, w\,dS$ vanishes and that belong to the orthogonal complement of $W_1\cup W_2$ with respect to the inner product \eqref{innerproduct}. Its easily verified that $W_1$, $W_2$, and $W_3$ are pairwise orthogonal with respect to the inner product \eqref{innerproduct} and
\begin{eqnarray}
H^1_{per}(Y\setminus R)/\mathbb{C}=W_1\oplus W_2\oplus W_3\oplus \mathbb{C},
\label{decomppp}
\end{eqnarray}
where the constant part of a function $u$ belonging to this space is uniquely determined by the condition $\int_{Y\setminus R} u\,dy=0$.

The following theorem 
describing all eigenvalue eigenfunction pairs is established in section \ref{genrealizedelectrostaticspectra}.
\begin{theorem}
The eigenvalues for \eqref{resonance} are real and constitute a denumerable set contained inside $[-1/2,1/2]$ with the only accumulation point being zero. The eigenspaces associated with $\lambda=1/2$ and  $\lambda=-1/2$ are $W_1$ and $W_2$ respectively. Eigenspaces associated with distinct eigenvalues in $(-1/2,1/2)$ are  pairwise orthogonal, and their union spans the subspace $W_3$.  
\label{completeeigen}
\end{theorem}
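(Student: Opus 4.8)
The plan is to recast the weak eigenvalue problem \eqref{resonance} as a spectral problem for a single bounded self-adjoint operator on the Hilbert space $H^1_{per}(Y\setminus R)/\mathbb{C}$ with inner product \eqref{innerproduct}, and then to exploit the orthogonal decomposition \eqref{decomppp} to split the spectrum into an explicit ``extreme'' part and a compact part. First I would introduce the bounded sesquilinear form $a(\psi,v)=-\tfrac12\int_P\nabla\psi\cdot\nabla\overline v\,d\y+\tfrac12\int_H\nabla\psi\cdot\nabla\overline v\,d\y$ and represent it through the Riesz map as $a(\psi,v)=(A\psi,v)$ for a bounded operator $A$. Writing $M$ for the operator representing the $P$-part $\int_P\nabla\psi\cdot\nabla\overline v$, the identity $\int_{Y\setminus R}=\int_H+\int_P$ gives $A=\tfrac12 I-M$ with $0\le M\le I$. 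Since $a(\psi,v)=\overline{a(v,\psi)}$ and $a$ is bounded, $A$ is self-adjoint, so its eigenvalues are real; the estimate $|a(\psi,\psi)|\le\tfrac12(\psi,\psi)$ confines the spectrum to $[-1/2,1/2]$, the eigenvalues $\lambda$ of $A$ and $\mu$ of $M$ being related by $\mu=\tfrac12-\lambda\in[0,1]$.

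Next I would identify the endpoints. The relation $A\psi=\tfrac12\psi$ is equivalent to $M\psi=0$, i.e. $\int_P|\nabla\psi|^2=0$, so $\psi$ has vanishing gradient on $P$; matching traces across the smooth interface $\partial P$ together with the definition of $W_1$ shows the $\lambda=1/2$ eigenspace is exactly $W_1$. Symmetrically, $A\psi=-\tfrac12\psi$ forces $\int_H|\nabla\psi|^2=0$ and yields $W_2$. Because $A$ is self-adjoint and $W_1$, $W_2$ are invariant eigenspaces, the complement $W_3$ occurring in \eqref{decomppp} is $A$-invariant, and the values $\pm1/2$ cannot recur within $W_3$.

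The heart of the argument, and the step I expect to be the main obstacle, is to prove that the restriction $A|_{W_3}$ is compact. Orthogonality to $W_1$ and to $W_2$ means precisely that each $w\in W_3$ is weakly harmonic separately in $H$ and in $P$, so $w$ is determined by its trace on $\partial P$ and the action of $A$ on $W_3$ reduces to a combination of the interior and exterior Dirichlet-to-Neumann maps on $\partial P$. I would represent $w$ by a single-layer potential with density on $\partial P$ and rewrite $A|_{W_3}$ in terms of the Neumann--Poincar\'e (adjoint double-layer) operator; for the smooth simply connected curve $\partial P$ this operator is smoothing, hence compact, and compactness transfers to $A|_{W_3}$. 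Alternatively one can argue directly by a Rellich-type compact embedding of the trace space, bounding $\|A w\|$ by a norm one degree stronger than the energy norm restricted to $\partial P$.

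Once compactness is established, the spectral theorem for compact self-adjoint operators applies to $A|_{W_3}$, delivering a denumerable family of real eigenvalues with $0$ as the only possible accumulation point and with mutually orthogonal, finite-dimensional eigenspaces whose closed span is all of $W_3$; since $\pm1/2$ do not occur inside $W_3$, these eigenvalues lie in $(-1/2,1/2)$. Combining this with the identification of the $\pm1/2$ eigenspaces as $W_1$ and $W_2$, and noting that $\pm1/2$ are therefore isolated from the $W_3$-spectrum so that $0$ remains the sole accumulation point of the full set, yields every assertion of the theorem.
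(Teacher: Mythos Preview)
Your proposal is correct and follows essentially the same route as the paper: the paper likewise represents the form \eqref{resonance} by a bounded self-adjoint operator $T$, identifies the $\pm\tfrac12$ eigenspaces with $W_1$ and $W_2$ directly, and then proves compactness of $T$ restricted to $W_3$ by writing $T=S_PK_P^*S_P^{-1}$, where $S_P$ is a single-layer potential and $K_P^*$ the Neumann--Poincar\'e (adjoint double-layer) operator on $\partial P$, exactly the mechanism you sketch. The one technical point you leave implicit, and which the paper handles explicitly, is that the single-layer potential must be built from a periodic Green's function on $Y\setminus R$ carrying a Neumann condition on $\partial R$ so that $S_P$ genuinely maps $L^2_0(\partial P)$ onto $W_3$.
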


We denote the denumerable set of eigenvalues for \eqref{resonance} by the sequence  $\{\lambda_i\}_{i=1}^\infty$. Here we put $\lambda_1=0$. The orthogonal projections onto $W_1$ and $W_2$ are denoted by $\mathcal{P}_1$ and $\mathcal{P}_2$. The orthogonal projections associated with $\lambda_i$ in $(-1/2,1/2)$ are denoted by $\mathcal{P}_{\lambda_i}$. 
For $z\not=(\lambda_i+1/2)/(\lambda_i-1/2)$ the following existence theorem holds.
\begin{theorem}
Suppose $z\neq (\lambda_i+1/2)/(\lambda_i-1/2)$ for $\lambda_i \in [-\frac{1}{2},\frac{1}{2}]$ then
\begin{itemize}
\item
For any $F\in \left[H^1_{per}(Y\setminus R)/\mathbb{C}\right]^*$ such that $F(v)=0 $ for constant $v$, there exists a unique solution  $u\in H^1_{per}(Y\setminus R)/\mathbb{C} $ of the variational problem $B_z(u,v)=\overline{F(v)}$ for all $v\in H^1_{per}(Y\setminus R)/\mathbb{C}$. 

\item The transformation $T_z$ from $H^1_{per}(Y\setminus R)/\mathbb{C}$ onto itself  has the representation formula given by
\begin{eqnarray}
T_z =\mathcal{P}_1 +z \mathcal{P}_2 +\sum_{-\frac{1}{2}<\lambda_n<\frac{1}{2}} (1+(z-1)(\frac{1}{2}-\lambda_n))\mathcal{P}_{\lambda_n},
\label{Tz1}
\end{eqnarray}
with inverse 
\begin{eqnarray}
T_z^{-1}=\mathcal{P}_1 +z^{-1} \mathcal{P}_2 +\sum_{-\frac{1}{2}<\lambda_n<\frac{1}{2}} (1+(z-1)(\frac{1}{2}-\lambda_n))^{-1}\mathcal{P}_{\lambda_n},\label{Tzinus1}
\end{eqnarray}
and $B_z(u,v)=(T_z u,v)$ for all $u$, $v$ in $H^1_{per}(Y\setminus R)/\mathbb{C}$.
\end{itemize}
\label{existencetheoremforoutsideR}
\end{theorem}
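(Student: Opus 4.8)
The plan is to reduce the entire statement to the spectral theorem for a single bounded self‑adjoint operator. First I would introduce the operator $A$ on $H^1_{per}(Y\setminus R)/\mathbb{C}$ defined by $(A\psi,v)=-\frac12\int_P\nabla\psi\cdot\nabla\overline{v}\,d\y+\frac12\int_H\nabla\psi\cdot\nabla\overline{v}\,d\y$, so that the resonance problem \eqref{resonance} is exactly the eigenvalue equation $A\psi=\lambda\psi$. A Cauchy–Schwarz estimate against the inner product \eqref{innerproduct} shows $A$ is bounded with $\|A\|\le 1/2$, and the real symmetric structure of the two integrals makes $A$ self‑adjoint. Since $\int_H\nabla u\cdot\nabla\overline{v}=\tfrac12(u,v)+(Au,v)$ and $\int_P\nabla u\cdot\nabla\overline{v}=\tfrac12(u,v)-(Au,v)$, substituting into \eqref{bilinear} yields the clean identity
\[
B_z(u,v)=(T_z u,v),\qquad T_z=\tfrac{1+z}{2}I+(1-z)A .
\]
Exhibiting $T_z$ as an affine function of $A$ is the whole point of the argument: every spectral statement about $T_z$ now follows from one about $A$.

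Next I would invoke Theorem \ref{completeeigen}, which furnishes the complete orthogonal spectral resolution of $A$ relative to the decomposition \eqref{decomppp}: the projections $\mathcal{P}_1,\mathcal{P}_2$ onto the eigenspaces $W_1,W_2$ for $\lambda=\tfrac12,-\tfrac12$, the projections $\mathcal{P}_{\lambda_n}$ for the eigenvalues in $(-\tfrac12,\tfrac12)$ spanning $W_3$, the resolution of the identity $I=\mathcal{P}_1+\mathcal{P}_2+\sum_n\mathcal{P}_{\lambda_n}$, and the diagonalization $A=\tfrac12\mathcal{P}_1-\tfrac12\mathcal{P}_2+\sum_n\lambda_n\mathcal{P}_{\lambda_n}$. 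Inserting these into $T_z=\tfrac{1+z}{2}I+(1-z)A$ and collecting projection by projection gives the coefficient $1$ on $\mathcal{P}_1$, the coefficient $z$ on $\mathcal{P}_2$, and the coefficient $\tfrac{1+z}{2}+(1-z)\lambda_n=1+(z-1)(\tfrac12-\lambda_n)$ on $\mathcal{P}_{\lambda_n}$, which is precisely \eqref{Tz1}.

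For the inverse I would observe that $T_z$ is diagonal in this orthogonal family of projections, so it is invertible exactly when each coefficient is nonzero, and its inverse is obtained by reciprocating each coefficient, giving \eqref{Tzinus1}. A short computation shows the coefficient on $\mathcal{P}_{\lambda_n}$ vanishes iff $z=(\lambda_n+1/2)/(\lambda_n-1/2)$; the coefficient $z$ on $\mathcal{P}_2$ vanishes iff $z=0$, the value attached to $\lambda=-\tfrac12$; and the coefficient $1$ on $\mathcal{P}_1$ never vanishes, consistent with its forbidden value $\lambda=\tfrac12$ being sent to $z=\infty$. Thus the single hypothesis $z\neq(\lambda_i+1/2)/(\lambda_i-1/2)$ for all $\lambda_i\in[-\tfrac12,\tfrac12]$ is exactly what renders every coefficient nonzero.

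The step requiring genuine care—and the one I would flag as the main obstacle—is showing that $T_z^{-1}$ is a \emph{bounded} operator rather than merely a formal algebraic inverse, which requires the coefficients on the $\mathcal{P}_{\lambda_n}$ to be bounded away from zero. Here I would use that $\lambda_n\to 0$ is the only accumulation point (Theorem \ref{completeeigen}), so the coefficients $1+(z-1)(\tfrac12-\lambda_n)$ converge to $\tfrac{1+z}{2}$; this limit is nonzero precisely because $z=-1$ corresponds to $\lambda_1=0$, which is excluded by hypothesis. Hence all but finitely many coefficients cluster near the nonzero value $\tfrac{1+z}{2}$, the finitely many exceptions are nonzero by hypothesis, and therefore $\inf_n|\text{coefficient}|>0$, making $T_z^{-1}$ bounded. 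Existence and uniqueness then follow immediately: the constants lie in the kernel of the seminorm \eqref{innerproduct} and of every $B_z(\cdot,v)$, so the problem is naturally posed on $H^1_{per}(Y\setminus R)/\mathbb{C}$; the assumption that $F$ annihilates constants is exactly the compatibility condition letting $F$ be represented by Riesz as $\overline{F(v)}=(f,v)$ there, whereupon $B_z(u,v)=\overline{F(v)}$ becomes $T_z u=f$, solved uniquely by $u=T_z^{-1}f$.
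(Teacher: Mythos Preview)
Your proposal is correct and follows essentially the same route as the paper: use the spectral decomposition of Theorem~\ref{completeeigen} to diagonalize $B_z$ projection by projection, read off \eqref{Tz1} and \eqref{Tzinus1}, and then apply the Riesz representation theorem. Your packaging via the affine identity $T_z=\tfrac{1+z}{2}I+(1-z)A$ and your explicit verification that the diagonal coefficients are bounded away from zero (using that $0$ is the only accumulation point of the $\lambda_n$) are a bit more careful than the paper's own treatment, but the underlying argument is the same.
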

\noindent This theorem is proved in section \ref{exist}.

In the following sections we present the sequence of boundary value problems for determining $\psi_m$ in $Y\setminus R$ and $R$ together with the sequence of solvability conditions characterizing $\xi_m$. In this section we use the complete orthonormal systems of eigenfunctions associated with electrostatic resonances and Dirichlet eigenvalues to explicitly solve for fields $\psi_0$ in $Y$ and $\psi_1$ in $Y\setminus R$ and provide an explicit formula for $\xi_0$.
In the following section  the boundary value problems used to determine $\psi_m$ in $Y\setminus R$, $m\geq 2$ and $\psi_m$ in $R$ for $m\geq 1$
together with solvability conditions for $\xi_m$ for $m\geq 1$ are shown to be a well posed infinite system of equations see Theorem \ref{Existencesequence}.

Applying the convention $\psi_m=0$ for $m<0$ in \eqref{summation2} shows that $\psi_0$ is the solution of
\begin{eqnarray}
B_z(\psi_0,v)=0,\hbox{ for all $v$ in $H^1(Y\setminus R)$}.
\label{zero}
\end{eqnarray}
From Theorem  \ref{completeeigen} and \eqref{decomppp} we have the  dichotomy:
\begin{enumerate}
\item   $\xi_0$ satisfies $\epsilon_P^{-1}(\xi_0)=(\lambda_i+1/2)/(\lambda_i-1/2)$ and $\psi_0$ is an eigenfunction for \eqref{resonance}.
\item    $\xi_0$ satisfies $\epsilon_P^{-1}(\xi_0)\not=(\lambda_i+1/2)/(\lambda_i-1/2)$, $i=1,2,\ldots$ and $\psi_0=constant$.
\end{enumerate}
In this article we assume the second alternative. Subsequent work will investigate the case when the first alternative is applied.
For future reference the condition $\epsilon_P^{-1}(\xi_0)\not=(\lambda_i+1/2)/(\lambda_i-1/2)$ is equivalent to
\begin{eqnarray}
\xi_0\not=\zeta_i,&\zeta_i\equiv(\lambda_i+\frac{1}{2})\frac{\epsilon_r\omega_p^2}{c^2}&,0\leq\zeta_i\leq\frac{\epsilon_r\omega_p^2}{c^2} \hbox{ $i=1,2,\ldots$.}
\label{notonresonance}
\end{eqnarray}

Restricting to test functions $v$ with support in $R$ in \eqref{summation2}   we get
\begin{eqnarray}
 \int_R\left(\nabla \psi_0\cdot\nabla \overline{v}-\xi_0\psi_0 \overline{v}\right)\,d\y=0.
\label{weakformofu0inR}
\end{eqnarray}
From continuity we have the boundary condition for $\psi_0$ on   $R$ given by $\psi_0=const.$
We denote the Dirichlet eigenvalues for $R$ by $\nu_j$, $j=1,2,\ldots$. Here we have the alternative:
\begin{enumerate}
\item If $\xi_0$ is a Dirichlet eigenvalue $\nu_i$ of $-\Delta$ in $R$  then $\psi_0(\y)=0$ for $\y$ in $Y\setminus R$.
\item  If $\xi_0\not=\nu_i$, $i=1,2,\ldots$ then $\psi_0$ is the unique solution of the Helmholtz equation \eqref{weakformofu0inR} and $\psi_0=const.$ in $Y\setminus R$.
\label{altdir}
\end{enumerate}
In this treatment we will choose the second alternative $\xi_0\not=\nu_i$. The case when the first alternative is chosen will be taken up in future investigation. Since $u_0=\underline{u}_0\psi_0$ where $\underline{u}_0$ is an arbitrary constant we can without loss of generality make the choice $\psi_0=1$ for $\y$ in $Y\setminus R$.  Since $\xi_0\not=\nu_i$, $i=1,\dots$ and $\psi_0=1$ in $Y\setminus R$ a straight forward calculation gives $\psi_0$ in $R$ in terms of the complete set of  Dirichlet eigenfunctions and eigenvalues:
\begin{eqnarray}
 &&\psi_0=\sum_{n=1}^{\infty}\frac{\mu_n<\phi_n>_R}{\mu_n-\xi_0}\phi_n , \hbox{ in $R$}.
\label{explicitformofpsi0}
\end{eqnarray}
Note here that $\mu_n$ denote the  Dirichlet eigenvalues of $-\Delta$ in $R$ whose eigenfunctions $\phi_n$ have nonzero mean, $<\phi_n>_R=\int_R\phi_n(y)dy\neq 0$. The Dirichlet eigenvalues associated with zero mean eigenfunctions are denoted by $\mu'_n$ and $\{\nu_n\}_{n=1}^\infty=\{\mu_n\}_{n=1}^\infty\cup\{\mu'_n\}_{n=1}^\infty$.
\par

To find $\psi_1$ in $Y\setminus R$, we appeal to (\ref{summation2}) with $\psi_0=1$ in $Y\setminus R$ to discover
\begin{eqnarray}
B_z(\psi_1,v)=-\int_H \hat\kappa \cdot \nabla\overline{v}-\int_P\epsilon_P^{-1}(\xi_0)\hat\kappa\cdot \nabla\overline{v} ~~~\forall ~~ v\in H^1_{per}(Y)
\end{eqnarray}
It follows from  Theorem \ref{existencetheoremforoutsideR} that the problem has a unique solution subject to the mean-zero condition: $\int_{Y\setminus R}\psi_1=0$ provided that $\xi_0\not=\zeta_i$,  $i=1,2,\ldots$. 
We apply the decomposition of $H^1_{per}(Y\setminus R)/\mathbb{C}$ given by \eqref{decomppp} and represent $\psi_1$ in terms of the complete set of orthonormal eigenfunctions $\{\psi_{\lambda_n}\}\subset W_3$ associated with $-1/2<\lambda_n <1/2$ together with 
the complete orthonormal sets of functions for $W_1$ and $W_2$, denoted by $\{\psi^1_n\}_{n=1}^\infty$ and $\{\psi^2_n\}_{n=1}^\infty$ respectively.
A straight forward calculation gives the representation for $\psi_1$ in $Y\setminus R$
{\footnotesize\begin{eqnarray}
\psi_1=-\sum_{-\frac{1}{2}<\lambda_n<\frac{1}{2}}\left(\frac{(\alpha^1_{\lambda_n}+\epsilon_P^{-1}(\xi_0)\alpha^2_{\lambda_n})}{1+(\epsilon_P^{-1}(\xi_0)-1)(1-\lambda_n)}\right)\psi_{\lambda_n}+\sum_{n=1}^\infty \alpha_{1,n}\psi^1_n,\hbox{ in $Y\setminus R$}
\label{expansionpsi1}
\end{eqnarray}}
with
{\footnotesize\begin{eqnarray}
 \alpha^1_{\lambda_n}=\hat{\kappa}\cdot\int_H\nabla\psi_{\lambda_n}\,d\y, & \alpha^2_{\lambda_n}=\hat{\kappa}\cdot\int_P\nabla\psi_{\lambda_n}\,d\y, \hbox{~and}&
\alpha_{1,n}=\hat{\kappa}\cdot\int_H\nabla\psi^1_n\,d\y.
\label{coefficients}
\end{eqnarray}}

Setting  $v=1$ and $m=2$ in \eqref{summation2} we recover the solvability condition given by 
\begin{eqnarray}
&& \tau^2\int_{H\cup P}\big[ -\hat\kappa\cdot\xi_0\nabla \psi_1+\xi_0 \big]-\tau^2\epsilon_r\frac{\omega_p^2}{c^2}\int_H(-\hat\kappa\nabla \psi_1+1)\label{solvablety}
\\&&=\int_Y(\xi_0^2\psi_0-\epsilon_r\frac{\omega_p^2}{c^2}\xi_0 \psi_0)\nonumber
\end{eqnarray}
Substitution of the spectral representations for $\psi_1$ and $\psi_0$ given by \eqref{explicitformofpsi0} and \eqref{expansionpsi1} into \eqref{solvablety} delivers the quasistatic dispersion relation
\begin{eqnarray}
 \xi_0=\tau^2 n_{eff}^{-2}(\xi_0),
\label{subwavelengthdispersion1}
\end{eqnarray}
where the effective index of diffraction $n_{eff}^2$ depends upon the direction of propagation $\hat{\kappa}$ and is written
\begin{eqnarray}
n_{eff}^2(\xi_0)=\mu_{eff}(\xi_0)/\epsilon_{eff}^{-1}(\xi_0)\hat\kappa\cdot\hat\kappa.
\label{indexofrefract}
\end{eqnarray}
The frequency dependent effective magnetic permeability $\mu_{eff}$ and effective dielectric permittivity $\epsilon_{eff}$ are given by
\begin{eqnarray}
\mu_{eff}(\xi_0)=\int_Y\psi_0=\theta_H+\theta_P+\sum_{n=1}^\infty\frac{\mu_n<\phi_n>^2_R}{\mu_n-\xi_0}\label{effperm}
\end{eqnarray}
and
{\footnotesize\begin{eqnarray}
  &&\epsilon^{-1}_{eff}(\xi_0)\hat\kappa\cdot\hat\kappa =\int_{H}(I-\mathbb{P})\hat{\kappa}\cdot\hat{\kappa}\,d\y+\frac{\xi_0}{\xi_0-\frac{\epsilon_r\omega_p^2}{c^2}}\theta_P\nonumber\\
&&-\sum_{-\frac{1}{2}<\lambda_h <\frac{1}{2}}\left( \frac{\left(\xi_0-\frac{\epsilon_r\omega_p^2}{c^2} \right)|\alpha_{\lambda_h}^{(1)}|^2+2\frac{\epsilon_r\omega_p^2}{c^2}\alpha_{\lambda _h}^{(1)}\alpha_{\lambda_h}^{(2)}+\frac{\left(\frac{\epsilon_r\omega_p^2}{c^2}\right)^2}{\xi_0-\frac{\epsilon_r\omega_p^2}{c^2}}|\alpha_{\lambda_h}^{(2)}|^2}{\xi_0-s_h} \right),\label{effdielectricconst}
\end{eqnarray}}
where $\theta_H$ and $\theta_P$ are the areas occupied by regions $H$ and $P$ respectively. The first  term on the right hand side of \eqref{effdielectricconst} is positive and frequency independent. It is written in terms of
the spectral projection $\mathbb{P}$ of square integrable vector fields over $H$ onto the subspace of gradients of potentials $\psi$ in $H_{0,per}^1(H)$.
Here $\int_{H}\mathbb{P}\hat{\kappa}\cdot\hat{\kappa}\,d\y<\theta_H$ and $\mathbb{P}\sigma=\sum_{n=1}^\infty\left(\int_H\nabla\psi_n^1\cdot\sigma\,d\y\right)\nabla\psi_n^1$ with $\sum_{n=1}^\infty|\alpha_{1,n}|^2=\int_{H}\mathbb{P}\hat{\kappa}\cdot\hat{\kappa}\,d\y$. The poles $s_h$ are the subset of values $\zeta_h$ for which at least one of the weights $\alpha_{\lambda_h}^{(1)}$ and $\alpha_{\lambda_h}^{(2)}$ are non zero. The values of $\zeta_h$ for which both weights vanish are denoted by
$s'_h$ and $\{\zeta_h\}_{h=1}^\infty=\{s_h\}_{h=1}^\infty\cup\{s'_h\}_{h=1}^\infty$. The graphs 
of $\mu_{eff}$ and $\epsilon^{-1}_{eff}\hat\kappa\cdot\hat\kappa$ as functions of $\xi_0$ are displayed in Fig. \ref{ximueffrelation} and Fig. \ref{xiepsiloneffrelation}. Here the intervals $a'<\xi_0 <b'$ and $a''<\xi_0 <b''$ are the same in all graphs.

\par
\begin{figure}[h!]
\begin{center}
\begin{psfrags} 
\psfrag{mu}{$\mu_{eff}$} 
\psfrag{wc}{$\xi_0$}
\psfrag{k1}{$\mu_1$}
\psfrag{k2}{$\mu_2$}
\psfrag{k3}{$\mu_3$}
\psfrag{c}{$\cdots$}
\psfrag{a1}{$a'$} 
\psfrag{b1}{$b'$}
\psfrag{a2}{$a''$}
\psfrag{b2}{$b''$}
\includegraphics[width=0.4\textwidth]{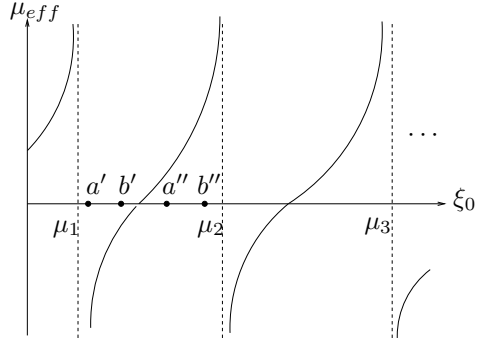}
\end{psfrags} 
\caption{The relation between $\mu_{eff}$ and $\xi_0$.}
\label{ximueffrelation}
\end{center}
\end{figure}
\par

\begin{figure}[h!]
\begin{center}
\begin{psfrags} 
\psfrag{e}{$\epsilon^{-1}_{eff}\hat\kappa\cdot\hat\kappa$} 
\psfrag{wc}{$\xi_0$}
\psfrag{k3}{$\epsilon_r\frac{\omega_p^2}{c^2}$}
\psfrag{k2}{$s_1$}
\psfrag{k1}{$s_2$}
\psfrag{c}{$\cdots$}
\psfrag{a1}{$a'$} 
\psfrag{b1}{$b'$}
\psfrag{a2}{$a''$}
\psfrag{b2}{$b''$}
\includegraphics[width=0.45\textwidth]{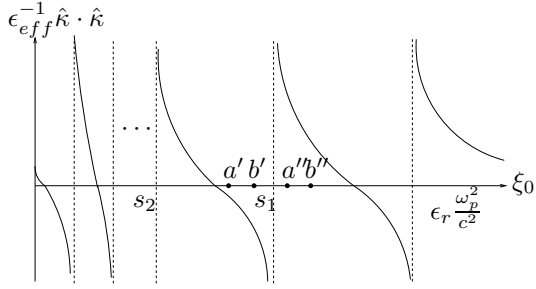}
\end{psfrags} 
\caption{The relation between $\epsilon^{-1}_{eff}\hat\kappa\cdot\hat\kappa$ and $\xi_0$.}
\label{xiepsiloneffrelation}
\end{center}
\end{figure}
\par

For future reference is is convenient to write the dispersion relation explicitly in terms of $\mu_{eff}$ and $\epsilon_{eff}$ and 
\begin{eqnarray}
 \mu_{eff}(\xi_0)\xi_0=\tau^2\epsilon_{eff}^{-1}(\xi_0)\hat\kappa\cdot\hat\kappa.
\label{subwavelengthdispersion1alternate}
\end{eqnarray}
The branches of the quasistatic dispersion relation $(\tau,\hat{\kappa})\mapsto\xi_0$  are controlled by the poles and zeros of $\mu_{eff}$ and $\epsilon_{eff}^{-1}$ explicitly
determined through the Dirichlet spectra and generalized electrostatic resonances. 

\section{Solution of higher order problems}
\label{higherorder}
Having determined $\psi_0$ in $Y$, $\psi_1$ in $Y\setminus R$, and $\xi_0$ we now
provide the algorithm for determining the rest of the elements in the sequences $\{\psi_m\}_{m=0}^\infty$, $\{\xi_m\}_{m=0}^\infty$.
The algorithm is summarized in Theorem \ref{Existencesequence}.

\noindent Step I. {\em Solution of $\psi_m$ in $R$ for $m\geq 1$.}

We restrict the trial space to test functions $v$ with support in $R$
and put $m\mapsto m+2$ in \eqref{summation2}. We decompose $\psi_m$ according to
\begin{eqnarray}
\psi_m=\tilde{\psi}_m+(-i)^{m}\xi_{m}\psi_*
\label{decomp}
\end{eqnarray}
and substitute into \eqref{summation2}.
This decomposition is chosen such that $\tilde{\psi}_m$ depends on $\xi_n$ and $\psi_n$ inside $R$ for  $n\leq m-1$. The function $\psi_*$ depends only upon $\xi_0$ and $\psi_0$ in $R$. 
The function $\tilde{\psi}_m$ solves the Dirichlet boundary value problem with $\tilde{\psi}_m|_{\partial R-}=\psi_m|_{\partial R+}$ and 
\begin{eqnarray}
\int_R\left(\nabla\tilde{\psi}_m\cdot\nabla\overline{v}-\xi_0\tilde{\psi}_m\overline{v}\right)\,d\y=\int_R F\cdot\nabla\overline{v}\,d\y+\int_R G\overline{v}\,d\y,
\label{Rproblem}
\end{eqnarray}
where
\begin{eqnarray}
F&=&-\xi_0^{-1}\epsilon_p^{-1}(\xi_0)[\sum^{m-1}_{l=1}(-i)^l\xi_l\nabla \psi_{m-l}+\hat{\kappa}(\sum_{l=0}^{m-1}(-i)^l\xi_l\psi_{m-1-l})]
\nonumber
\\&&+\xi_0^{-1}\epsilon_p^{-1}(\xi_0)\epsilon_r\frac{\omega_p^2}{c^2}\hat{\kappa}\psi_{m-1},
\label{F}
\end{eqnarray}
and
{\small
\begin{eqnarray}
G&=&\xi_0^{-1}\epsilon_p^{-1}(\xi_0)(\hat{\kappa}\cdot \sum_{l=0}^{m-1}(-i)^l\xi_l\nabla \psi_{m-1-l}+\sum_{l=0}^{m-2}(-i)^l\xi_l\psi_{m-2-l})\nonumber
\\&&-\xi_0^{-1}\epsilon_p^{-1}(\xi_0)(\hat{\kappa}\cdot\nabla \psi_{m-1}+\psi_{m-2})\label{G}
\\&&+\xi_0^{-1}\epsilon_p^{-1}(\xi_0)[\sum_{l=1}^{m-1}\sum_{n=0}^l \xi_{m-l}\xi_n\psi_{l-n}i^{l-n-m}-\epsilon_r\frac{\omega_p^2}{c^2}\sum_{l=1}^{m-1}(-i)^l\xi_l\psi_{m-l}\nonumber
\\&&+\xi_0\sum_{n=1}^{m-1}\xi_n\psi_{m-n}i^{-n}].
\nonumber
\end{eqnarray}}
This boundary value problem has a unique solution for $\xi_0\not=\nu_i$, $i=1,\ldots$.
The function $\psi_*$ is the solution of 

\begin{equation}
\begin{cases}
\int_R(-\nabla\psi_*\cdot \nabla \overline{v}+ \xi_0\psi_*\overline{v})+\int_R\psi_0\overline{v}=0\\
\psi_*|_{\partial R-}=0\label{inproblem}
\end{cases}
\end{equation}
The explicit representation for  $\psi_*$ is obtained using the Dirichlet eigenfunctions on $R$ and is given by, 
\begin{equation}
 \psi_*= \sum_{n=1}^{\infty}\frac{\mu_n<\phi_n>_R}{(\mu_n-\xi_0)^2}\phi_n  ~~\text{ in } R. 
\label{psi*}
\end{equation}

\noindent Step II. {\em Solution of $\psi_m$ in $Y\setminus R$ for $m\geq 2$.}

We decompose $\psi_m$
\begin{eqnarray}
\psi_m=\psi_m'+(-i)^{m-1}\xi_{m-1}\hat{\psi}
\label{decompp}
\end{eqnarray}
and substitute into \eqref{summation2}.
This decomposition is chosen such that $\psi'_m$ depends on $\xi_n$ for $0\leq n \leq m-2$, the functions $\psi_n$, $n\leq m-1$ in $Y\setminus R$ and the functions $\psi_n$, $n\leq m-2$ in $R$. The function $\hat{\psi}$ depends only on $\xi_0$ and $\psi_1$ in $Y\setminus R$.

For $m\geq 2$, $\psi_m'$ in $Y\setminus R$ subject to the mean zero condition $\int_{Y\setminus R} \psi_m'\,dy=0$ is the solution of
{\footnotesize
\begin{eqnarray}
\tau^2B_z(\psi_m',v)&=&\int_{Y\setminus R} \left(F_1\cdot\nabla \overline{v}+G_1\overline{v}\right)\,d\y\nonumber
\\&&+\int_R \left(F_2\cdot\nabla\overline{v}+G_2\overline{v}\right)\,d\y, \quad \hbox{ for all $v$ in $H^1_{per}(Y)$},
\label{varformpsimprime}
\end{eqnarray}}
where 
\begin{eqnarray}
F_1&=&\xi_0^{-1}\epsilon_p^{-1}(\xi_0)\tau^2\big[\sum^{m-2}_{l=1}(-i)^l\xi_l\nabla \psi_{m-l} +(\sum_{l=0}^{m-2}(-i)^l\xi_l\psi_{m-1-l})\hat{\kappa}\big]\nonumber\\
&&-\xi_0^{-1}\epsilon_p^{-1}(\xi_0)\tau^2\epsilon_r\frac{\omega_p^2}{c^2}\chi_H\hat{\kappa}\psi_{m-1},
\label{F1}
\end{eqnarray}
{\footnotesize
\begin{eqnarray}
G_1&=&\xi_0^{-1}\epsilon_p^{-1}(\xi_0)\tau^2\big[ \hat{\kappa}\cdot\sum_{l=0}^{m-2}(-i)^l\xi_l(-\nabla \psi_{m-1-l})-\sum_{l=0}^{m-2}(-i)^l\xi_l\psi_{m-2-l}\big]\nonumber\\ &&-\xi_0^{-1}\epsilon_p^{-1}(\xi_0)\tau^2\epsilon_r\frac{\omega_p^2}{c^2}\chi_H\big[\hat{\kappa}\cdot(-\nabla \psi_{m-1})-\psi_{m-2}\big]\nonumber
\\&&+\xi_0^{-1}\epsilon_p^{-1}(\xi_0)\big[\sum_{l=0}^{m-2}\sum_{n=0}^l \xi_{m-2-l}\xi_n \psi_{l-n} i^{l-n-m}\label{G1}
\\&&-\epsilon_r\frac{\omega_p^2}{c^2}\sum_{l=0}^{m-2}(-i)^l\xi_l \psi_{m-2-l}\big],\nonumber
\end{eqnarray}}
\begin{eqnarray}
F_2&=&-\xi_0^{-1}\epsilon_p^{-1}(\xi_0)\big[\sum^{m-2}_{l=0}(-i)^l\xi_l\nabla \psi_{m-2-l} +\hat{\kappa}(\sum_{l=0}^{m-3}(-i)^l\xi_l\psi_{m-3-l})
\big] \nonumber\\
&&+\xi_0^{-1}\epsilon_p^{-1}(\xi_0)\epsilon_r\frac{\omega_p^2}{c^2}\big[\nabla \psi_{m-2}+\hat{\kappa}(\psi_{m-3})\big],
\label{F2}
\end{eqnarray}
and
{\footnotesize
\begin{eqnarray}
G_2&=&-\xi_0^{-1}\epsilon_p^{-1}(\xi_0)\big[ +\hat{\kappa}(\sum_{l=0}^{m-3}(-i)^l\xi_l(-\nabla \psi_{m-3-l}))\nonumber
\\&&-\sum_{l=0}^{m-4}(-i)^l\xi_l\psi_{m-4-l}\big] +\xi_0^{-1}\epsilon_p^{-1}(\xi_0)\epsilon_r\frac{\omega_p^2}{c^2}\big[\hat{\kappa}\cdot(-\nabla \psi_{m-3})+\psi_{m-4}\big]\nonumber
\\&&+\xi_0^{-1}\epsilon_p^{-1}(\xi_0)\big[\sum_{l=0}^{m-2}\sum_{n=0}^l \xi_{m-2-l}\xi_n \psi_{l-n} i^{l-n-m}\overline{v}\label{G2}
\\&&-\epsilon_r\frac{\omega_p^2}{c^2}\sum_{l=0}^{m-2}(-i)^l\xi_l \psi_{m-2-l}\overline{ v}\big],\nonumber 
\end{eqnarray}}
where $\chi_H$ is the indicator function of the host domain $H$ taking $1$ inside and zero outside. Here we have used the identity $\psi_0(\y)=1$, for $\y\in Y\setminus R$ in determining the formulas for the right hand side of \eqref{varformpsimprime}.

Setting $v=1$ in \eqref{varformpsimprime} delivers the solvability condition
\begin{eqnarray}
\int_{Y\setminus R}\,G_1\,d\y+\int_{R}\,G_2\,d\y=0.
\label{solvabilitycondt}
\end{eqnarray}

The condition 
\begin{eqnarray}
-\nabla\cdot F_2+G_2=0\hbox{  for $\y$ in $R$}\label{divzero}
\end{eqnarray}
follows from the solution of Step I see, \eqref{decomp}, \eqref{Rproblem} and \eqref{inproblem}.
Integration by parts on the right hand side of \eqref{varformpsimprime} together with \eqref{divzero} transforms \eqref{varformpsimprime}
 into the equivalent Neumann boundary value problem for $\psi'_m$, $m\geq 2$, given by
{\footnotesize
\begin{eqnarray}
\tau^2B_z(\psi_m',v)&=&\int_{Y\setminus R} \left(F_1\cdot\nabla \overline{v}+G_1\overline{v}\right)\,d\y\label{varformpsimprimeYminusR}
\\&&+\int_{\partial R} F_2\cdot n\overline{v}\,ds, \quad\hbox{ for all $v$ in $H^1_{per}(Y\setminus R)$},\nonumber
\end{eqnarray}}
where $n$ is the outward directed unit normal to $\partial R$. Finally we observe that the solvability condition for the Neumann problem \eqref{varformpsimprimeYminusR} given by
\begin{eqnarray}
\int_{Y\setminus R}\,G_1\,d\y+\int_{\partial R}\,F_2\cdot n\,ds=0,
\label{solvabilitycondtneuman}
\end{eqnarray}
follows immediately from \eqref{solvabilitycondt} and \eqref{divzero}. This Neumann problem satisfies the hypotheses of Theorem \ref{existencetheoremforoutsideR} and we assert the existence of a  solution $\psi'_m$ for \eqref{varformpsimprimeYminusR}, uniquely determined by the condition $\int_{Y\setminus R} \psi_m'\,dy=0$,  provided that $\xi_0\not=\zeta_i$, $0\leq \zeta_i\leq\frac{\epsilon_r\omega_P^2}{c^2}$, $i=1,2,\ldots$.

The field $\hat{\psi}$ 
solves 
\begin{equation}
\begin{cases}
B_z(\hat{\psi},v)=-\xi_0^{-1}\epsilon_p^{-1}(\xi_0)\int_{Y\setminus R}(\nabla \psi_1+\hat\kappa)\cdot\nabla\overline{v}, \\
\\
\int_{Y\setminus R}\hat\psi=0 ,
\end{cases}
\label{hatpsisolution}
\end{equation}
for all trials $v\in H^1_{per}(Y\setminus R)$ .
The solution $\hat{\psi}$ is represented explicitly in terms of the eigenvectors associated with the generalized electrostatic resonances.
A straight forward calculation gives
{\footnotesize
\begin{eqnarray}
\hat{\psi}&=&\sum_{-\frac{1}{2}<\lambda_n<\frac{1}{2}}\frac{\psi_{\lambda_n}}{\xi_0-(\frac{1}{2}+\lambda_n)\frac{\epsilon_r\omega_p^2}{c^2}}\nonumber
\\&&\left(-(\alpha_{\lambda_n}^1+\alpha_{\lambda_n}^2)+\frac{(\xi_0-\frac{\epsilon_r\omega_p^2}{c^2})\left(\alpha_{\lambda_n}^1+\alpha_{\lambda_n}^2\left(\frac{\xi_0}{\xi_0-\frac{\epsilon_r\omega_p^2}{c^2}}\right)\right)}{\xi_0-s_n}\right).
\label{psihatformula}
\end{eqnarray}}

\noindent Step III. {\em Solution of $\xi_m$  for $m\geq 1$.} 

We apply the decomposition \eqref{decomp} to $\psi_{m-2}$ in $R$ and \eqref{decompp} to $\psi_{m-1}$ in $Y\setminus R$.   Substitution of the decompositions into the solvability condition \eqref{solvabilitycondt} and applying the explicit spectral representations \eqref{psi*}, \eqref{psihatformula} for $\psi_*$  and $\hat{\psi}$ we obtain the recursion relation for determining $\xi_1,\xi_2,\ldots$ given by
{\footnotesize
\begin{eqnarray}
\xi_{m-2}\mathcal{G}(\xi_0) &=&\tau^2\int_{Y\setminus R}\big[\hat{\kappa}\cdot (\sum_{l=1}^{m-3}(-i)^l\xi_l\nabla\psi_{m-1-l}+\xi_0\nabla \psi_{m-1}')+\sum_{l=0}^{m-3}(-i)^l\xi_l\psi_{m-2-l}\big]\nonumber
\\&&-\tau^2\int_H\epsilon_r\frac{\omega_p^2}{c^2}(\hat\kappa\cdot\nabla\psi_{m-1}'+\psi_{m-2})\nonumber
\\&&-\int_R\big[\hat{\kappa}\cdot \sum_{l=0}^{m-3}(-i)^l\xi_l\nabla\psi_{m-3-l}+\sum_{l=0}^{m-4}(-i)^l\xi_l\psi_{m-4-l}\big]\label{iterationforxim1}
\\&&-\int_Y
\big[\sum_{l=1}^{m-3}\sum_{n=0}^l i^{l-n-m}\xi_{m-2-l}\xi_n\psi_{l-n}-\sum_{l=1}^{m-3}(-i)^l\xi_0\xi_l\psi_{m-2-l}\big]\nonumber
\\&&-\int_R\xi_0^2\tilde\psi_{m-2}-\int_{Y\setminus R}\xi_0^2\psi_{m-2} +\int_Y \epsilon_r\frac{\omega_p^2}{c^2}\sum_{l=1}^{m-3}(-i)^l\xi_l\psi_{m-2-l}\nonumber
\\&&+\int_R \epsilon_r\frac{\omega_p^2}{c^2}\xi_0\tilde\psi_{m-2}+\int_{Y\setminus R} \epsilon_r\frac{\omega_p^2}{c^2}\xi_0\psi_{m-2}.\nonumber
\end{eqnarray} }
Noting that \eqref{decomp} and \eqref{decompp} can also be applied to the lower order terms on the right hand side of \eqref{iterationforxim1} we see that \eqref{iterationforxim1} determines $\xi_{m-2}$ from $\xi_n$,  $\psi_n$ in $Y\setminus R$,  and $\psi_n$ in $R$ with $0\leq n\leq m-3$. Here the ``solvability matrix,'' $\mathcal{G}$ depends explicitly on $\xi_0$ and is given by
\begin{eqnarray}
\mathcal{G}(\xi_0) &=&\theta_H+\theta_P\nonumber
\\&&+\tau^2\big[\int_H\mathbb{P}\hat{\kappa}\cdot\hat{\kappa}\,d\y-\sum_{n=1}^\infty (g_n^2(\xi_0)|\alpha_{\lambda_n}^1+\epsilon^{-1}_P(\xi_0)\alpha^2_{\lambda_n}|^2)\nonumber
\\&&+2g_n(\xi_0)(\alpha_{\lambda_n}^1+\alpha^2_{\lambda_n})(\alpha_{\lambda_n}^1+\epsilon^{-1}_P(\xi_0)\alpha^2_{\lambda_n})\big]\label{B}
\\&& -2(\xi_0+\frac{\epsilon_r\omega_p^2}{c^2})\mu_{eff}(\xi_0)+\xi_0(\xi_0-\frac{\epsilon_r\omega_p^2}{c^2})\sum_{n=1}^\infty\frac{\mu_n\langle\phi_n\rangle_R^2}{(\mu_n-\xi_0)^2},\nonumber
\end{eqnarray} 
where $g_n(\xi_0)=(\xi_0-\frac{\epsilon_r\omega_p^2}{c^2})/(\xi_0-s_n)$ and $\epsilon_P^{-1}(\xi_0)=\xi_0/(\xi_0-\frac{\epsilon_r\omega_p^2}{c^2})$.
The set of poles and zeros for $\mathcal{G}(\xi_0)$ are explicitly controlled by the generalized electrostatic resonances and Dirichlet spectra. The zeros of $\mathcal{G}$  are denoted by $\gamma_n$, $n=1,\ldots$. It is now evident that the recursion relation \eqref{iterationforxim1}  holds provided that $\xi_0\neq \nu_n$,  $\xi_0\neq \zeta_n$, and $\xi_0\not=\gamma_n$.

Denote the union of the Dirichlet spectra, the electrostatic resonances, and zeros of $\mathcal{G}$ by
\begin{eqnarray}
U\equiv\{\nu_j\}_{j=1}^\infty\cup\{\zeta_j\}_{j=1}^\infty\cup\{\gamma_j\}_{j=1}^\infty
\label{unionofspectra}
\end{eqnarray}
where $\{\nu_j\}_{j=1}^\infty\subset \mathbb{R}^+$, $\{\gamma_j\}_{j=1}^\infty\subset \mathbb{R}^+$, and $\{\zeta_j\}_{j=1}^\infty\subset [0,\frac{\epsilon_r\omega_p^2}{c^2}]$.
We collect results and state the following theorem.

\begin{theorem}
 If $\xi_0$ belongs to $\mathbb{R}^+\setminus U$ then the sequences  $\{\psi_m\}_{m=0}^\infty\subset  H^1_{per}(Y)$, $\{\xi_m\}_{m=0}^\infty\subset \mathbb{C}$ exist and are uniquely determined.
 \label{Existencesequence}
\end{theorem}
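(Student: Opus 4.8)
The plan is to argue by strong induction following the interleaved order in which Steps I--III manufacture the unknowns, taking as the base case the quantities $\xi_0$, $\psi_0\in H^1_{per}(Y)$, and $\psi_1|_{Y\setminus R}$ already produced in Section~\ref{Background}. The hypothesis $\xi_0\in\mathbb{R}^+\setminus U$ decouples into the three conditions $\xi_0\neq\nu_i$, $\xi_0\neq\zeta_i$, and $\xi_0\neq\gamma_i$ coming from \eqref{unionofspectra}, and each controls the solvability of exactly one of the three steps: the Dirichlet problem \eqref{Rproblem} in $R$ is uniquely solvable precisely when $\xi_0$ avoids the Dirichlet spectrum $\{\nu_i\}$; the exterior problem \eqref{varformpsimprimeYminusR} is uniquely solvable by Theorem~\ref{existencetheoremforoutsideR} precisely when $\xi_0\neq\zeta_i$ (equivalently $z\neq(\lambda_i+1/2)/(\lambda_i-1/2)$); and the scalar recursion \eqref{iterationforxim1} can be solved for the next $\xi$ precisely when the solvability matrix $\mathcal{G}(\xi_0)$ in \eqref{B} is nonzero, i.e. $\xi_0\neq\gamma_i$.

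The inductive cycle at level $m\geq 1$ is as follows. Assume $\xi_0,\dots,\xi_{m-1}$, the fields $\psi_0,\dots,\psi_m$ on $Y\setminus R$, and $\psi_0,\dots,\psi_{m-1}$ on $R$ are known, with $\psi_0,\dots,\psi_{m-1}\in H^1_{per}(Y)$. First I would solve the exterior Neumann problem \eqref{varformpsimprimeYminusR} for the auxiliary field $\psi'_{m+1}$; by the decomposition \eqref{decompp} its data involve only $\xi_n$ with $n\leq m-1$ and fields already in hand, and its compatibility condition \eqref{solvabilitycondtneuman} is, by \eqref{solvabilitycondt} and \eqref{divzero}, exactly the solvability relation that was used to fix $\xi_{m-1}$ at the previous level, hence already satisfied. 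With $\psi'_{m+1}$ available the recursion \eqref{iterationforxim1}, taken at index $m+2$, has a fully determined right-hand side depending only on data of order $\leq m-1$, so dividing by $\mathcal{G}(\xi_0)\neq 0$ yields $\xi_m\in\mathbb{C}$. I would then assemble $\psi_m$ on $R$ through \eqref{decomp}, solving the Dirichlet problem \eqref{Rproblem} for $\tilde\psi_m$ with boundary trace taken from the already known $\psi_m|_{Y\setminus R}$ and adding the term $(-i)^m\xi_m\psi_*$ with $\psi_*$ given by \eqref{psi*}; matching traces across $\partial R$ shows $\psi_m\in H^1_{per}(Y)$. Finally $\psi_{m+1}|_{Y\setminus R}=\psi'_{m+1}+(-i)^m\xi_m\hat\psi$ is completed using $\xi_m$ and the explicit $\hat\psi$ from \eqref{psihatformula}. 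This advances the hypothesis to level $m+1$, and the base case coincides exactly with the input required at $m=1$.

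Uniqueness propagates through the same scheme: each interior and exterior boundary value problem has a unique solution in its space (the mean-zero normalisation on $Y\setminus R$ and the prescribed Dirichlet trace on $R$) under the two spectral exclusions, while $\xi_m$ is the unique solution of the nonzero scalar equation $\xi_m\,\mathcal{G}(\xi_0)=(\text{known})$. Membership $\psi_m\in H^1_{per}(Y)$ follows at each stage from the continuity of the exterior trace and the interior Dirichlet datum on $\partial R$, and $\xi_m\in\mathbb{C}$ is immediate.

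I expect the main obstacle to be the combinatorial bookkeeping that certifies the scheme is causal, i.e. that after moving the $\xi_{m-2}$-proportional contributions of the decompositions \eqref{decomp} and \eqref{decompp} to the left-hand side, every remaining term in \eqref{iterationforxim1} and in the data $F_1,G_1,F_2$ of \eqref{varformpsimprimeYminusR} genuinely involves only strictly lower-order quantities. The delicate point is the interlock between Steps II and III: the compatibility condition that makes the order-$(m+1)$ exterior Neumann problem solvable must be recognised as the very solvability relation that determined $\xi_{m-1}$, so that no circular dependence arises and the Fredholm alternative is never violated. Once this triangular dependency is verified, the unique solvability at each individual step is supplied verbatim by Theorem~\ref{existencetheoremforoutsideR}, the Dirichlet theory on $R$, and the nonvanishing of $\mathcal{G}(\xi_0)$.
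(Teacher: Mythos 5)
Your overall strategy --- an induction that cycles through Steps I--III, with the three exclusions $\xi_0\neq\nu_i$, $\xi_0\neq\zeta_i$, $\xi_0\neq\gamma_i$ each licensing exactly one step --- is the same as the paper's, and both the base case and the identification of the Neumann compatibility condition \eqref{solvabilitycondtneuman} with the relation that fixed the previous $\xi$ are handled correctly. The one genuine problem is the order in which you run the cycle. You propose to extract $\xi_m$ from \eqref{iterationforxim1} (taken at index $m+2$) immediately after solving for $\psi'_{m+1}$, asserting that its right-hand side then depends only on data of order $\leq m-1$. It does not: after the shift $m-2\mapsto m$ the right-hand side still contains the terms $-\int_R\xi_0^2\tilde\psi_{m}$ and $+\int_R\epsilon_r\frac{\omega_p^2}{c^2}\xi_0\tilde\psi_{m}$, so the interior field $\tilde\psi_m$ must already be in hand before $\xi_m$ can be computed. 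In your scheme $\tilde\psi_m$ is only produced afterwards, when you ``assemble $\psi_m$ on $R$,'' so as written the step that produces $\xi_m$ cannot be executed.

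The repair is immediate and is exactly what the paper does: run Step I first. The Dirichlet problem \eqref{Rproblem} for $\tilde\psi_m$ needs only the trace of $\psi_m$ from $Y\setminus R$ (available from your induction hypothesis) together with $\xi_n,\psi_n$ for $n\leq m-1$; it does not involve $\xi_m$. The paper's cycle is therefore Step I ($\tilde\psi_m$ in $R$), then Step II ($\psi'_{m+1}$ in $Y\setminus R$), then Step III ($\xi_m$), after which the decompositions \eqref{decomp} and \eqref{decompp} assemble $\psi_m\in H^1_{per}(Y)$ and $\psi_{m+1}$ on $Y\setminus R$. With that transposition your argument coincides with the paper's proof.
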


\begin{proof}
The functions $\psi_0$ in $H^1(Y)$ and $\psi_1$ in $H^1(Y\setminus R)$ with $\int_{Y\setminus R}\psi_1\,dy=0$ exist as does $\xi_0$. Application of Step I uniquely determines $\tilde{\psi_1}$ in $R$. Application of Step II uniquely determines $\psi'_2$ in $Y\setminus R$. Step III uniquely determines $\xi_1$ and we apply \eqref{decomp} and \eqref{decompp} to recover $\psi_1$ in $H^1(Y)$ and $\psi_2$ in  $H^1(Y\setminus R)$ with $\int_{Y\setminus R}\psi_2\,dy=0$.
We now adopt the induction hypotheses $H_n$, $n\geq 2$:\\
There exist\\
1) $\psi_n\in H^1_{per}(Y\setminus R)$ with $\int_{Y\setminus R}\psi_n=0$ ;\\
2) $\psi_{j}\in H^1_{per}(Y) $ for $0\leq j\leq n-1$ with $\int_{Y\setminus R}\psi_{j}=0$, $1\leq j\leq n-1$;\\
3) $\xi_{n-1} \in \mathbb{C}$.\\
Application of Step I uniquely determines $\tilde{\psi_n}$ in $R$. Application of Step II uniquely determines $\psi'_{n+1}$ in $Y\setminus R$. Step III uniquely determines $\xi_n$ and we apply \eqref{decomp} and \eqref{decompp} to recover $\psi_n$ in $H^1(Y)$ and $\psi_{n+1}$ in  $H^1(Y\setminus R)$ with $\int_{Y\setminus R}\psi_{n+1}\,dy=0$.
\end{proof}


\section{Band structure for the metamaterial crystal}
\label{bandstructure}
In this section we present 
explicit formulas describing the band structure for the metamaterial crystal in the dynamic, sub-wavelength regime $1>\eta>0$.
The  formulas show that the  frequency intervals associated with pass bands and stop bands are governed by the poles and zeros of the effective magnetic permittivity and dielectric permittivity tensors. The poles and zeros are explicitly determined  by the Dirichlet spectra of $R$ and the generalized electrostatic resonances of $Y\setminus R$, see \eqref{effperm} and \eqref{effdielectricconst}. For the general class of  parallel rod configurations  treated here the pass bands and stop bands can exhibit anisotropy, i.e.,  dependence on the direction of propagation described by $\hat{\kappa}$. The anisotropy is governed
by the projection of $\hat{\kappa}$ onto the eigenfunctions associated with the generalized electrostatic resonances given by \eqref{coefficients}.
In what follows pass bands are explicitly linked to frequency intervals and propagation directions for which both the effective magnetic permittivity and dielectric permeability have the same sign.  This includes frequency intervals where effective tensors are either simultaneously positive or negative.

\par
\begin{figure}[h!]
\begin{center}
\begin{psfrags} 
\psfrag{a1}{$a'$} 
\psfrag{b1}{$b'$}
\psfrag{a2}{$a''$}
\psfrag{b2}{$b''$}
\psfrag{tao}{$\tau^2$}
\psfrag{xi}{$\xi_0$}
\psfrag{I1}{$I_{n'}$}
\psfrag{I2}{$I_{n''}$}
\includegraphics[width=0.4\textwidth]{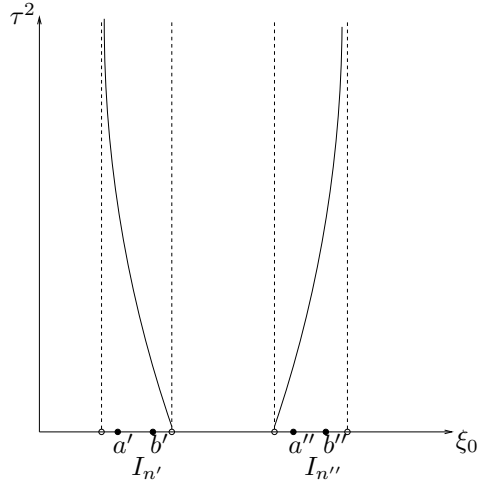}
\end{psfrags} 
\caption{The leading order dispersion relation over two selected intervals $I_{n'}$ and $I_{n''}$.}
\label{tauxirelation}
\end{center}
\end{figure}
\par

We begin by identifying the locations of the branches for the dispersion relation associated with the metamaterial crystal. We introduce the union of open intervals  $\bigcup_n O_n$ on the positive real axis $\mathbb{R}^+$ obtained by removing the points $\{\zeta_j\}_{j=1}^\infty$, $\{\nu_j\}_{j=1}^\infty$, $\{\gamma_j\}_{j=1}^\infty$, $\{s_j^*\}_{j=1}^\infty$ and  $\{\mu_j^*\}_{j=1}^\infty$.   Here $s_j^*$ is the zero of $\epsilon_{eff}^{-1}(\xi_0)\hat\kappa\cdot\hat\kappa$ between $s_j$ and $s_{j+1}$ and  $\mu_j^*$ is the zero of $\mu_{eff}(\xi_0)$ between $\mu_j$ and $\mu_{j+1}$.  The leading order dispersion relation \eqref{subwavelengthdispersion1alternate}
implicitly defines  the map $(\tau,\hat\kappa)\mapsto\xi_0$.  We denote the branch associated with $O_n$ by $\xi_0^n=\xi_0^{(n)}(\tau,\hat\kappa)$ . Let $I_n$ be an open interval strictly contained inside $O_n$.  For this choice $I_n\subset O_n$ does not intersect the union of the Dirichlet spectra, generalized electrostatic spectra and the zeros of the solvability matrix $\mathcal{G}$, see  \eqref{unionofspectra}.  

The band structure for the metamaterial in the dynamic sub-wavelength  regime $1>\eta>0$ is given by the following theorem.
\par
\begin{theorem}
The dispersion relation for the metamaterial crystal contains an infinite sequence of branches with leading order behavior given by the 
functions $\xi_0^n=\xi_0^n(\tau,\hat{\kappa})$. For $\tau$ such that $\xi_0^n$ belongs to $I_n$  there exists a constant $R$ depending only on  $I_n$ such that for $0<\rho< R$, the branch of the dispersion relation for the metamaterial crystal is given by
\begin{eqnarray}
 \xi=\xi_0^n(\tau,\hat\kappa)+\sum_{l=1}^{\infty}(\rho\tau)^{l}\xi_{l}^n,
 \label{xipowerseries}
\end{eqnarray}
for
\begin{eqnarray}
\{-2\pi\leq \tau\rho\hat{\kappa}_1\leq 2\pi , -2\pi\leq \tau\rho\hat{\kappa}_2\leq 2\pi\}.
\label{brullionfortau}
\end{eqnarray}
Here the higher order terms $\xi_{l}^n$ are real and are uniquely determined by $\xi_0^n$ according to Theorem \ref{Existencesequence}.
\label{convergenceofxi}
\end{theorem}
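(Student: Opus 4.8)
The plan is to upgrade the formal construction of Theorem \ref{Existencesequence} into a convergent expansion and then verify that its sum is a genuine Bloch eigenpair. The infinitude of branches is already built in: the pole/zero structure of $\mu_{eff}$ and $\epsilon^{-1}_{eff}$ in the leading dispersion relation \eqref{subwavelengthdispersion1alternate} produces infinitely many intervals $O_n$ and hence infinitely many leading terms $\xi_0^n$, so the analytic content is the convergence argument carried out on each $I_n$. Throughout I fix $I_n$, which by hypothesis is compactly contained in $O_n$ and therefore keeps $\xi_0^n$ a positive distance $\delta=\delta(I_n)$ from the entire critical set $U$ in \eqref{unionofspectra} and from the extra zeros $s_j^*,\mu_j^*$. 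Since $\xi_0^n$ solves \eqref{subwavelengthdispersion1alternate}, the wave number $\tau$ stays in a bounded set as $\xi_0^n$ ranges over $I_n$, so any constant allowed to depend on $I_n$ may silently absorb the $\tau$ and $\hat{\kappa}$ dependence. I would then prove geometric coefficient bounds $\Vert\psi_m\Vert_Y\le CK^m$ and $|\xi_m|\le CK^m$ with $C,K$ depending only on $I_n$; this gives the series \eqref{upower}--\eqref{xipowerexpansion} radius of convergence $1/K$ in $\eta=\rho\tau$, hence a threshold $R=R(I_n)$ on $\rho$ below which \eqref{xipowerseries} converges. For $\rho<R$ small the Bloch vector $\tau\rho\hat{\kappa}$ automatically lies in the Brillouin zone \eqref{brullionfortau}.

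The geometric bounds rest on three solution operators from Steps I--III, each of which must be controlled uniformly on $I_n$. Solving \eqref{varformpsimprimeYminusR} in $Y\setminus R$ uses the inverse of $B_z$, namely $T_z^{-1}$ from Theorem \ref{existencetheoremforoutsideR}, whose norm is bounded because $z=\epsilon_P^{-1}(\xi_0)$ avoids the resonant values, equivalently $\xi_0\neq\zeta_i$ with gap $\delta$. Solving the Dirichlet problem \eqref{Rproblem} in $R$ uses $(-\Delta-\xi_0)^{-1}$, bounded because $\operatorname{dist}(\xi_0,\{\nu_i\})\ge\delta$; the explicit spectral formulas \eqref{explicitformofpsi0} and \eqref{psi*} make this quantitative. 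Finally each $\xi_{m-2}$ is recovered from \eqref{iterationforxim1} by dividing through by the solvability factor $\mathcal{G}(\xi_0)$ of \eqref{B}, whose reciprocal is bounded because $\xi_0\neq\gamma_i$. I would collect these into one constant $M=M(I_n)$ bounding $\Vert T_z^{-1}\Vert$, $\Vert(-\Delta-\xi_0)^{-1}\Vert$, and $|\mathcal{G}(\xi_0)|^{-1}$.

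With $M$ in hand I would run the induction of Theorem \ref{Existencesequence} quantitatively. Estimating the data $F,G$ of \eqref{Rproblem}, the data $F_1,G_1,F_2,G_2$ of \eqref{varformpsimprimeYminusR}, and the source in \eqref{iterationforxim1}, each is a finite convolution of the lower-order quantities $\Vert\psi_{m-l}\Vert$ and $|\xi_l|$ against the fixed constants $\epsilon_r\omega_p^2/c^2,\theta_H,\theta_P$, together with the quadratic terms $\sum_{l}\sum_n\xi_{m-2-l}\xi_n\psi_{l-n}$. Introducing majorizing sequences $a_m\ge\Vert\psi_m\Vert_Y$ and $b_m\ge|\xi_m|$ satisfying a single closed inequality of the schematic form $a_m+b_m\le M\sum_{l\ge1}\big(a_{m-l}+b_l\,a_{m-l}+\sum_n b_{m-2-l}b_n a_{l-n}\big)$, I would compare this with the Taylor coefficients of the analytic solution of an algebraic equation and conclude $a_m,b_m\le CK^m$. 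The main obstacle is precisely this bookkeeping: the recursion advances $\psi_m$ in $R$, $\psi_m$ in $Y\setminus R$, and $\xi_m$ in a staggered order, and the genuinely quadratic coupling in $\xi$ and $\psi$ forces the majorant to dominate all three interlocked sequences at once; arranging the majorant so that its generating function is analytic at the origin is where the real work sits.

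It then remains to identify the sum with a true branch and to check reality. For the first point I would observe that the truncated pair $\big(\sum_{m\le N}\eta^m u_m,\ \sum_{m\le N}\eta^m\xi_m\big)$ satisfies the exact variational identity \eqref{variational2} up to a residual that is $O(\eta^{N+1})$ in the dual norm of $H^1_{per}(Y)$, uniformly for $\eta<1/K$; the geometric bounds let me send $N\to\infty$, so the limit satisfies \eqref{variational2} exactly and is therefore a branch of the dispersion relation with leading term $\xi_0^n$. For reality of $\xi_l^n$ I would note that for real $\xi$ and real $\eta$ every coefficient in \eqref{variational2} is real, since $\epsilon_P(\xi)$ is real, so the Helmholtz problem \eqref{Helmholtz} has real Bloch eigenvalues $\rho^2\xi$; the constructed branch, being the one reducing to the real value $\xi_0^n$ at $\eta=0$, is real for real $\eta$, which forces its Taylor coefficients $\xi_l^n$ to be real. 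The concrete verification is a direct induction on \eqref{iterationforxim1}: the convention $u_m=i^m\underline{u}_0\psi_m$ cancels the spurious powers of $i$, and one checks $\operatorname{Im}\xi_m=0$, together with the appropriate reality of $\psi_m$, stage by stage.
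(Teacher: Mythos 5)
Your proposal follows essentially the same route as the paper: uniform bounds on the three solution operators ($T_z^{-1}$, the Dirichlet resolvent $(-\Delta-\xi_0)^{-1}$, and $\mathcal{G}(\xi_0)^{-1}$) over $I_n$, a staggered system of recursive inequalities for the interlocked sequences, a majorant whose generating functions are shown analytic at $\rho=0$ (the paper does this via the implicit function theorem applied to the algebraic system $A_i=0$), and then verification that the summed series satisfies \eqref{variational2} together with a short reality argument. The only cosmetic difference is your reality check (self-adjointness/continuation from $\eta=0$ versus the paper's quadratic-in-$\xi$ discriminant argument obtained by setting $v=u$), which is immaterial.
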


The power series representation for the transverse magnetic Bloch wave  \eqref{em3}  is given by the following theorem.
\begin{theorem}
For $I_n$ and $R$ as in Theorem \ref{convergenceofxi} and for $\tau$ such that $\xi_0^n$ belongs to $I_n$  there exist transverse magnetic  Bloch waves given by the expansion
\begin{eqnarray}
H_3=\underline u_0\left(\psi_0(\x/d)+\sum_{l=1}^{\infty}(\rho\tau)^li^l\psi_l(\x/d)\right)exp\left\{i\left(k\hat\kappa\cdot \x-t\omega\right)\right\},
\end{eqnarray}
where  the series
\begin{eqnarray}
\psi_0(\y)+\sum_{l=1}^{\infty}(\rho\tau)^li^l\psi_l(\y)
\label{convgncee}
\end{eqnarray}
is summable in  $H^1(Y)$  for $0\leq\rho<R$ and $\frac{\omega}{c}=\sqrt{\frac{\xi}{\epsilon_r}}$.
\label{summable}
\end{theorem}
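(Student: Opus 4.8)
The plan is to deduce the summability of the field series \eqref{convgncee} in $H^1(Y)$ directly from the convergence of the frequency series \eqref{xipowerseries} already established in Theorem \ref{convergenceofxi}. The key observation is that the two power series are not independent: the functions $\psi_m$ are determined iteratively from $\xi_0,\ldots,\xi_{m-1}$ and the lower-order fields through the boundary value problems of Step I and Step II in section \ref{higherorder}, and the existence and uniqueness of each $\psi_m$ is guaranteed by Theorem \ref{Existencesequence} precisely because $\xi_0^n$ is confined to $I_n\subset O_n$, which avoids the Dirichlet spectrum, the generalized electrostatic resonances, and the zeros of $\mathcal{G}$. Thus the proof reduces to establishing quantitative bounds on the $H^1$ norms of $\psi_m$ that grow at most geometrically, so that the radius of convergence in $\rho\tau$ is bounded below uniformly for $\xi_0^n\in I_n$.

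First I would split the analysis according to the two pieces of the iteration. On $Y\setminus R$ the field $\psi_m'$ solves the Neumann problem \eqref{varformpsimprimeYminusR}, whose solvability is controlled by $T_z^{-1}$; since $I_n$ stays a fixed positive distance from the resonance values $\zeta_i$, the representation formula \eqref{Tzinus1} shows that $\|T_z^{-1}\|$ is bounded by a constant $C_1=C_1(I_n)$ uniform over $\xi_0^n\in I_n$, giving $\|\psi_m'\|\leq C_1(\|F_1\|+\|G_1\|+\|F_2\|_{\partial R})$. Inside $R$ the field $\tilde\psi_m$ solves the Dirichlet problem \eqref{Rproblem}; since $I_n$ avoids the Dirichlet eigenvalues $\nu_j$, the spectral gap $\operatorname{dist}(\xi_0^n,\{\nu_j\})\geq \delta>0$ yields a uniform bound $\|\tilde\psi_m\|_{H^1(R)}\leq C_2(\|F\|+\|G\|)$ via the eigenfunction expansion, as in \eqref{explicitformofpsi0} and \eqref{psi*}. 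Then I would track the way the right-hand sides $F,G,F_1,G_1,F_2,G_2$ and the recursion \eqref{iterationforxim1} for $\xi_{m-2}$ (with the uniform lower bound $|\mathcal{G}(\xi_0^n)|\geq c>0$ from $\xi_0^n\notin\{\gamma_j\}$) are assembled from the fields and frequency coefficients of order $\leq m-1$.

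The main obstacle — and the heart of the argument — is the combinatorial bookkeeping of the convolution sums in \eqref{F}--\eqref{G}, \eqref{F1}--\eqref{G2}, and \eqref{iterationforxim1}. These are Cauchy-type products of the sequences $\{\xi_l\}$ and $\{\psi_l\}$, so the natural approach is a majorant (method of majorants) argument: introduce constants $A,B$ and a radius $r_0>0$ and prove by strong induction that $\|\psi_m\|_{H^1(Y)}\leq A\,B^m$ and $|\xi_m|\leq A\,B^m$ for all $m$. The inductive step requires that each quadratic convolution $\sum_{l}\xi_l\psi_{m-l}$ obey $\sum_{l}(AB^l)(AB^{m-l})=(m+1)A^2B^m$, and one absorbs the polynomial factor $(m+1)$ by slightly enlarging $B$; the uniform operator bounds $C_1,C_2$ and the lower bound on $|\mathcal{G}|$ then close the induction provided $B$ is chosen large enough in terms of $C_1,C_2,c,\tau,\epsilon_r\omega_p^2/c^2$ and $\|\hat\kappa\|=1$. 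This geometric control gives summability of \eqref{convgncee} in $H^1(Y)$ for $\rho\tau<B^{-1}$, and setting $R=\min(B^{-1}/\tau,\,R_0)$ with $R_0$ the radius from Theorem \ref{convergenceofxi} completes the proof; the substitution $\frac{\omega}{c}=\sqrt{\xi/\epsilon_r}$ and the definition $\x=d\y$, $\eta=\rho\tau$ then recover the stated Bloch-wave expansion for $H_3$.
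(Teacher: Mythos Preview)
Your overall strategy---establish uniform a priori bounds on the solution operators inside $R$ and in $Y\setminus R$ using the spectral separation guaranteed by $\xi_0^n\in I_n$, then propagate these through the recursion---matches the paper's Steps~I and~II in Section~\ref{convergence}. The gap is in your closure of the induction. With the pure geometric ansatz $\|\psi_m\|_{H^1(Y)}\leq AB^m$, $|\xi_m|\leq AB^m$, the Cauchy products in \eqref{F}--\eqref{G2} and \eqref{iterationforxim1} produce, as you note, a bound of the form $C(m{+}1)A^2B^m$; to close the induction you would need $C(m{+}1)A\leq 1$ for all $m$, which fails. ``Slightly enlarging $B$'' does not help: the inductive hypothesis and the conclusion carry the \emph{same} $B$, so there is no ratio $(B'/B)^m$ available to absorb the polynomial factor. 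This is the classical obstruction that forces the method of majorants to go beyond a bare geometric ansatz.

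The paper handles this by abandoning the direct induction and instead introducing generating functions for the five coupled sequences $\overline p_m=\|\tau^m\psi_m\|_{Y\setminus R}$, $\tilde p_m=\|\tau^m\tilde\psi_m\|_{H^1(R)}$, $p_m=\|\tau^m\psi_m\|_{H^1(R)}$, $p_m'=\|\tau^m\psi_m'\|_{Y\setminus R}$, $s_m=|\tau^m\xi_m|$. The recursive inequalities are replaced by \emph{equalities} defining a majorizing sequence, and the corresponding generating functions $a,b,c,d,e$ satisfy an explicit polynomial system $A_i(a,b,c,d,e,\rho)=0$, $i=1,\ldots,5$. The implicit function theorem at $\rho=0$ (where the Jacobian determinant is $1$) then yields analyticity of $a,\ldots,e$ in a neighborhood of the origin, hence a positive radius of convergence depending only on $I_n$. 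This mechanism automatically absorbs the convolution combinatorics; your ansatz does not. If you wish to avoid generating functions, a workable repair is an ansatz with extra polynomial decay, e.g.\ $\|\psi_m\|\leq AB^m/(m{+}1)^2$, since $\sum_{l=0}^m(l{+}1)^{-2}(m{-}l{+}1)^{-2}\leq C(m{+}1)^{-2}$ with $C$ independent of $m$, and then the induction can close---but that is not what you wrote.

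A minor framing point: the paper does not deduce Theorem~\ref{summable} \emph{from} Theorem~\ref{convergenceofxi}; the two are proved simultaneously by the single majorant argument in Section~\ref{convergence}, followed by the verification in Section~\ref{powerseriessolutionofcellproblem} that the summed series actually solves \eqref{variational2}.
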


Theorem \ref{convergenceofxi} explicitly shows that the leading order behavior  determines the existence of pass bands or stop bands when $\rho=d/\sqrt{\epsilon_r}$ is sufficiently small. With this in mind we can appeal to \eqref{subwavelengthdispersion1alternate} and state the following theorem. 
\begin{theorem}
\label{bands}
For $\rho>0$ sufficiently small: 
\begin{itemize}
\item There exist propagating Bloch wave solutions along directions $\hat{\kappa}$ over intervals $I_n$  for which $\mu_{eff}(\xi_0^n)$ and $\epsilon_{eff}^{-1}(\xi_0^n)\hat{\kappa}\cdot\hat{\kappa}$ have the same sign. 
\item 
Intervals $I_n$ for which $\mu_{eff}(\xi_0^n)$ and $\epsilon_{eff}^{-1}(\xi_0^n)\hat{\kappa}\cdot\hat{\kappa}$ have the opposite sign lie within stop bands.
\end{itemize}
\end{theorem}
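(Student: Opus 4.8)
The plan is to reduce Theorem \ref{bands} to the leading-order dispersion relation \eqref{subwavelengthdispersion1alternate} by invoking the convergence result of Theorem \ref{convergenceofxi}. The central observation is that over any interval $I_n$ strictly contained in $O_n$, the branch $\xi_0^n(\tau,\hat\kappa)$ is the leading term of a convergent power series in $\rho\tau$, so for $\rho$ sufficiently small the existence of a genuine Bloch mode at real frequency $\xi$ is controlled entirely by whether the leading-order relation admits a real solution $\xi_0^n$ for real $\tau$. First I would rewrite \eqref{subwavelengthdispersion1alternate} in the form $\tau^2 = \mu_{eff}(\xi_0)\,\xi_0\,\big/\,\epsilon_{eff}^{-1}(\xi_0)\hat\kappa\cdot\hat\kappa$, and note that since $\xi_0>0$ on $\mathbb{R}^+$, the right-hand side is positive precisely when $\mu_{eff}(\xi_0)$ and $\epsilon_{eff}^{-1}(\xi_0)\hat\kappa\cdot\hat\kappa$ share the same sign.

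The first bullet then follows by a solvability argument: if on $I_n$ the two effective quantities have the same sign, the right-hand side is positive there, so one can solve for a real, positive value $\tau^2$, hence a real wave number $\tau$ lying in the admissible Brillouin range \eqref{brullionfortau} for $\rho$ small. By Theorem \ref{convergenceofxi} this real $\tau$ with $\xi_0^n\in I_n$ generates a convergent series \eqref{xipowerseries} whose terms $\xi_l^n$ are real, so the full frequency $\xi$ is real; feeding this into Theorem \ref{summable} produces an $H^1$-convergent field expansion, i.e. a bona fide propagating Bloch wave along $\hat\kappa$. Conversely, for the second bullet, if the two quantities have opposite signs throughout $I_n$, the right-hand side of the rewritten dispersion relation is negative, so no real $\tau$ can satisfy it; the leading-order relation forces $\tau^2<0$, which is incompatible with a real wave number. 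Hence no propagating mode exists at these frequencies for small $\rho$, placing $I_n$ inside a stop band.

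The main obstacle is the passage from ``the leading-order relation has no real solution'' to ``the full crystal has a genuine stop band,'' since in principle the higher-order corrections $(\rho\tau)^l\xi_l^n$ could perturb a forbidden frequency into an allowed one. Here I would use the uniformity built into Theorem \ref{convergenceofxi}: the convergence radius $R$ depends only on $I_n$, and on the closure of $I_n$ the quantity $\mu_{eff}\,\xi_0\,\big/\,\epsilon_{eff}^{-1}\hat\kappa\cdot\hat\kappa$ is bounded away from the nonnegative reals by a fixed margin. Choosing $\rho$ small enough that the tail $\sum_{l\geq 1}(\rho\tau)^l\xi_l^n$ is smaller than this margin (uniformly over the compact parameter set \eqref{brullionfortau}) guarantees that $\tau^2$ cannot become nonnegative, so the stop band persists for the perturbed problem. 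The analogous continuity argument for the first bullet ensures the real branch survives the perturbation; in both directions the key is that the sign condition on the effective parameters is an open condition that is stable under the small-$\rho$ expansion, so the strict separation from the boundary frequencies $a',b',a'',b''$ provided by the choice $I_n\subset O_n$ is exactly what makes the argument uniform.
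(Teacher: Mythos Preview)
Your approach is essentially the same as the paper's, which treats Theorem \ref{bands} as an immediate consequence of Theorem \ref{convergenceofxi} together with the quasistatic dispersion relation \eqref{subwavelengthdispersion1alternate}; the paper offers no detailed proof beyond the sentence preceding the statement. Your argument is correct and in fact supplies more detail than the paper does, particularly the uniformity discussion for the stop-band direction, which the paper leaves implicit.
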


Leading order behavior for pass bands associated with double negative and double positive behavior are illustrated in Fig. \ref{tauxirelation} for two intervals $I_{n'}$ and $I_{n''}$.  Here the effective properties are both negative over the interval $I_{n'}$ while they are both positive over $I_{n''}$.

\section {Generalized electrostatic spectra}
\label{genrealizedelectrostaticspectra}

In this section we establish Theorem \ref{completeeigen} and characterize all pairs $(\lambda,\psi)$ in $\mathbb{C}\times H_{per}^1(Y\setminus R)$ satisfying  \eqref{resonance}. We start by recalling the bilinear form in \eqref{resonance} and forming the quotient
\begin{eqnarray}
Q(u)=\frac{-\frac{1}{2}\int_P|\nabla u|^2\,dx+\frac{1}{2}\int_H|\nabla u|^2\,dx}{(u,u)}.
\label{quotient}
\end{eqnarray}
From \eqref{quotient} it is evident that $\lambda$ lies inside $[-1/2,1/2]$.
It is easily seen that the solutions $(\lambda,\psi)$ of \eqref{resonance} for the choices $\lambda=\frac{1}{2}$ and $\lambda=-\frac{1}{2}$ correspond to $\psi$ in $W_1$ and $W_2$ respectively. Only the $0$ element of $W_3$ satisfies \eqref{resonance} for the choices $\lambda=\frac{1}{2}$ and $\lambda=-\frac{1}{2}$.

We now investigate solutions $(\lambda,\psi)$ for $\psi$ belonging to $W_3$.
The bilinear form \eqref{resonance} defines a map $T$ from $W_3$ into the space of skew linear functionals on $W_3$. Here $W_3$ is a Hilbert space and 
\begin{eqnarray}
(Tu,v)=-\frac{1}{2}\int_P\nabla u\cdot\nabla \overline{v}\,d\y+\frac{1}{2}\int_H\nabla u\cdot\nabla\overline{v}\,d\y,
\label{resonanceT}
\end{eqnarray}
for $u$ and $v$ in $W_3$.
In what follows we show that $T$ is a compact map from $W_3$ onto $W_3$ with eigenvalues contained inside the open interval $(-1/2,1/2)$.
We do this by providing an explicit representation for $T$ given in terms of a composition of single and double layer potentials.

Set $\mathbb{D}=\cup_{n\in\mathbb{Z}^2}((Y\setminus R)+n)$ and introduce the Green's function $G(\x,\y)$  on $\mathbb{D}$ such that:
$G$ is separately periodic in $\x$ and $\y$  with period $Y$, $G$ is $C^2$ in each of the variables $\x$ and $\y$ for $\x\not=\y$, and
{\footnotesize
\begin{eqnarray}
\Delta_\x G(\x,\y)&=&\sum_{n\in \mathbb{Z}^2}\delta(\x-\y+n) -1\quad\text{ in } \mathbb{D},\nonumber
\\\partial_{n_\x}G(\x,\y)&=&\frac{|R|}{|\partial R|},\quad \text{ for } \x\in \cup_{n\in\mathbb{Z}^2} (\partial R+n),
\label{greensfunction}
\end{eqnarray}}
where $\delta_\x$ is the Dirac delta function located at $\x$ and $|R|$ and $|\partial R|$ are the area and arc-length of $R$ and $\partial R$ respectively. 
We can find $G(\x,\y)$  as a sum of the periodic free space Green's function $F$ for the Laplacian and a corrector $\phi^*$: $G(\x,\y)=F(\x,\y)+\phi^*(\x,\y)$ , where 
\begin{eqnarray}
 F(x,y)=-\sum_{n\in \mathbb Z^2\setminus\{0\}}\frac{e^{i2\pi n\cdot(x-y)}}{4\pi^2|n|^2},
 \label{freespacegreens}
\end{eqnarray}
$\phi^*(\x,\y)$ is periodic in $\x$ and $\y$ with period $Y$, $C^2$ in $\x$, $\y$,  and solves
\begin{eqnarray}
&&
 \Delta_\x \phi^*(\x,\y)=0 ~\hbox{ for $\x$, $\y$ in } \mathbb{D} ,\nonumber\\
 && ~~\partial_{n_\x}\phi^*(\x,\y)_{{|_{\partial R}^+}}=-\partial_{n_\x}F(\x,\y)_{{|_{\partial R}^+}} +\frac{|R|}{|\partial R|}, \hbox{for $\x$ on $\partial R$} ,\label{pdeforphi}\nonumber\\
&&\hbox{and }\int_{\partial R}\phi^*(\x,\y)dS_\x=0.
 \label{correctorproblem}
\end{eqnarray}
Here the subscript ${\partial R}^+$, indicates  traces of functions on $\partial R$ taken from the outside of $R$.

The space of mean zero square integrable functions defined on $\partial P$ is denoted by $L^2_0(\partial P)$
and for $\phi$ in $L^2_0(\partial P)$ we introduce the single layer potential $\tilde{S}_P$ given by
\begin{eqnarray}
\tilde{S}_P(\phi)=\int_{\partial P} G(\x,\y)\phi(\y)dS_\y,
\label{singlelayer}
\end{eqnarray}
and the modified single layer potential $S_P$ mapping $L^2_0(\partial P)$ into $W_3$ 
given by
\begin{eqnarray}
{S}_P(\phi)=\tilde{S}_P(\phi)-|\partial P|^{-1}\int_{\partial P} \tilde{S}_P(\phi)(\y)dS_\y.
\label{singlelayerzeroavg}
\end{eqnarray}

On restricting $S_P(\phi)(\x)$ to $\x$ on $\partial P$, one  readily verifies that $S_P$ is self-adjoint with respect to the $L^2(\partial P)$ inner product and is a bounded one to one linear map from $L_0^2(\partial P)$ onto
 $H^{1/2}(\partial P)\cap L^2_0(\partial P)$. The inverse denoted by $S_{\partial P}^{-1}:H^{1/2}(\partial P)\cap L_0^2(\partial P)\rightarrow L_0^2(\partial P)$  is linear and continuous. We introduce the trace operator $\gamma$ mapping $W_3$ into $H^{1/2}(\partial P)\cap L_0^2(\partial P)$, this map is bounded and onto \cite{Adams}.
Collecting results we have the following
\begin{lemma}
 $S_p$ is a one to one, bounded linear transformation from $L^2_0(\partial P)$ onto $W_3$ with $S^{-1}_P: W_3\rightarrow L_0^2(\partial P)$  linear and continuous given by $S_P^{-1}=S_{\partial P}^{-1}\gamma$.
 \end{lemma}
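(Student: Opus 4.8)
The plan is to realize $S_P$ as the inverse of the map
$\Phi:=S_{\partial P}^{-1}\gamma:W_3\to L^2_0(\partial P)$, which is continuous by construction, so that injectivity, surjectivity, continuity of the inverse, and the formula $S_P^{-1}=S_{\partial P}^{-1}\gamma$ all drop out of the two composition identities $\Phi\circ S_P=\mathrm{id}_{L^2_0(\partial P)}$ and $S_P\circ\Phi=\mathrm{id}_{W_3}$. Boundedness of $S_P$ itself is then obtained for free from the bounded inverse theorem. The cited boundary isomorphism (that $\phi\mapsto S_P(\phi)|_{\partial P}$ is a one-to-one bounded map of $L^2_0(\partial P)$ onto $H^{1/2}(\partial P)\cap L^2_0(\partial P)$ with continuous inverse $S_{\partial P}^{-1}$) and the trace theorem (that $\gamma$ is bounded onto $H^{1/2}(\partial P)\cap L^2_0(\partial P)$, see \cite{Adams}) are taken as known inputs.

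First I would verify that $S_P$ genuinely lands in $W_3$. The mean-zero hypothesis on $\phi$ is exactly what makes this work: applying $\Delta_\x$ under the integral sign in \eqref{singlelayer} and using the first line of \eqref{greensfunction}, the background term contributes $-\int_{\partial P}\phi\,dS=0$, so $\tilde S_P(\phi)$ is harmonic in $H$ and in $P$; similarly the Neumann datum $|R|/|\partial R|$ integrates against $\phi$ to give $\partial_{n_\x}\tilde S_P(\phi)=0$ on $\partial R$. Passing to $S_P(\phi)$ in \eqref{singlelayerzeroavg} leaves all gradients unchanged while enforcing $\int_{\partial P}S_P(\phi)\,dS=0$. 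Integrating by parts against test functions drawn from $W_1$ (supported in $H$, vanishing on $\partial P$) and from $W_2$ (supported in $P$) and using the harmonicity together with the homogeneous Neumann condition on $\partial R$ then shows $S_P(\phi)$ is orthogonal to $W_1\cup W_2$ in the inner product \eqref{innerproduct}; hence $S_P(\phi)\in W_3$.

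Next come the two identities. For $\Phi\circ S_P=\mathrm{id}$, the trace $\gamma S_P(\phi)=S_P(\phi)|_{\partial P}$ is precisely the boundary restriction covered by the cited isomorphism, so it lies in $H^{1/2}(\partial P)\cap L^2_0(\partial P)$ and $S_{\partial P}^{-1}$ returns $\phi$; in particular $S_P$ is injective. For $S_P\circ\Phi=\mathrm{id}$, given $w\in W_3$ set $\phi=\Phi w=S_{\partial P}^{-1}(\gamma w)$, so that $S_P(\phi)$ and $w$ share the trace $\gamma w$ on $\partial P$. The difference $u=S_P(\phi)-w\in W_3$ is harmonic in both $H$ and $P$, carries zero Dirichlet data on $\partial P$ and zero Neumann data on $\partial R$, and is periodic. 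On $P$, uniqueness for the Dirichlet problem forces $u\equiv 0$; on the connected host $H$, integrating $|\nabla u|^2$ by parts and using $u=0$ on $\partial P$, $\partial_n u=0$ on $\partial R$, and the cancellation of the cell-boundary contributions by periodicity yields $\int_H|\nabla u|^2=0$, whence $u$ is constant and, vanishing on $\partial P$, is zero. Thus $S_P(\phi)=w$ and $S_P$ is onto. I expect this uniqueness step to be the main obstacle: it amounts to showing that an element of $W_3$ is determined by its trace on $\partial P$, which hinges on correctly reading off from the definition of $W_3$ the boundary conditions implicitly encoded in orthogonality to $W_1$ and $W_2$ (harmonicity in each phase and the homogeneous Neumann condition on $\partial R$) and then on the mixed boundary value problem over the perforated periodic cell.

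Finally, $\Phi=S_{\partial P}^{-1}\gamma$ is bounded as a composition of the bounded trace map $\gamma$ and the continuous inverse $S_{\partial P}^{-1}$. Having established that $\Phi$ is a bounded bijection of $W_3$ onto $L^2_0(\partial P)$ whose two-sided inverse is $S_P$, the bounded inverse theorem gives that $S_P$ is bounded, and the identities above yield the stated formula $S_P^{-1}=S_{\partial P}^{-1}\gamma$.
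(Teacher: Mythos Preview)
Your proof is correct and more thorough than the paper's, which records only the surjectivity step (your $S_P\circ\Phi=\mathrm{id}_{W_3}$) and treats the rest as evident from the preamble. The one genuine difference is in how the uniqueness $S_P(\phi)-w=0$ is argued. The paper observes that an $H^1_{per}(Y\setminus R)$ function with vanishing trace on $\partial P$ splits as the sum of its restrictions to $H$ and $P$ extended by zero, hence lies in $W_1\oplus W_2$; since the difference is also in $W_3=(W_1\oplus W_2)^\perp$, it must vanish. You instead unpack what membership in $W_3$ means analytically (harmonic in each phase, homogeneous Neumann on $\partial R$) and run a direct energy argument for the resulting mixed boundary value problem on $P$ and on $H$. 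The paper's route is a two-line application of the orthogonal decomposition \eqref{decomppp}; your route is longer but makes the PDE content of $W_3$ explicit, and the anticipated ``main obstacle'' you flag is precisely what the paper's decomposition argument sidesteps.
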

 \begin{proof}
Given $u$ in $W_3$ consider its trace on $\partial P$ given  by $\gamma u=u_{{|_{\partial P}}}$ and  $u_{{|_{\partial P}}}$ belongs to $H^{1/2}(\partial P)\cap L^2_0(\partial P)$. For $\x$ in $Y\setminus R$ set $w(\x)=S_P(S_{\partial P}^{-1}( u_{{|_{\partial P}}}))(\x)$. Since $w$ belongs to $W_3$ the difference $w-u$ also belongs to $W_3$.  Noting further that the traces of $w$ and $u$ agree on $\partial P$  we conclude that $w-u$ also belongs to $W_1\oplus W_2$. However  since $W_3=(W_1\oplus W_2)^\perp$ it is evident that $w-u=0$  and we conclude that $S_P^{-1}=S_{\partial P}^{-1}\gamma$.
 \end{proof}

The outward pointing normal derivative of a quantity $q$ on $\partial P$ is denoted by $\frac{\partial q}{\partial{n_\x}}$ and the  jump relations satisfied by the single layer potential are given by
\begin{eqnarray}
S_P(\phi)_{{|_{\partial P}^+}}=S_P(\phi)_{{|_{\partial P}^-}}, \hbox{ on $\partial P$},
\label{cont}
\frac{\partial S_P(\phi)}{\partial{n_\x}}{{|_{{\partial P}\stackrel{+}{-}}}}=\pm\frac{1}{2}\phi+K_P^*(\phi), \hbox{ on $\partial P$}
\label{jump}
\end{eqnarray}
where the double layer $K_P^*$ is a bounded linear operator mapping $L_0^2(\partial P)$ into $L_0^2(\partial P)$ defined by
\begin{eqnarray}
K^*_P(\phi)=\int_{\partial P} \frac{\partial G(\x,\y)}{\partial n_\x}\phi(\y)dS_\y,
\label{double}
\end{eqnarray}
and the subscripts $+$ and $-$ indicate traces from the outside and inside of $P$ respectively. Here $\frac{\partial G}{\partial n_\x}$ is a continuous kernel of order zero and it follows \cite{Folland,ColtonKress} that $K^*_P$ is a compact operator mapping $L_0^2(\partial P)$ into $L_0^2(\partial P)$.
Now we identify the transform $T$ defined by \eqref{resonanceT}.

\begin{theorem}
\label{T} 

The linear map $T:W_3\rightarrow W_3$ defined by the sesquilinear form \eqref{resonanceT} is given by
\begin{eqnarray}
T=S_P K_P^*S_P^{-1},
\label{ident}
\end{eqnarray}
and is a compact bounded  self-adjoint operator on $W_3$ with eigenvalues lying inside $(-1/2,1/2)$. 
\end{theorem}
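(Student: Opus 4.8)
The plan is to establish the three assertions of Theorem \ref{T} --- the identity $T = S_P K_P^* S_P^{-1}$, compactness, and the spectral inclusion in $(-1/2,1/2)$ --- by reducing the abstract form \eqref{resonanceT} to an explicit boundary-integral representation and then transferring the known properties of the double-layer operator $K_P^*$ back to $T$. First I would verify the identity \eqref{ident}. The key observation is that for $\phi$ in $L^2_0(\partial P)$, the function $u = S_P(\phi)$ in $W_3$ is harmonic separately in $P$ and in $H$ (since $G$ satisfies \eqref{greensfunction} away from the source and the integral is supported on $\partial P$, with the mean-value and normalization conditions built into $G$ and $S_P$). For such harmonic $u$ one integrates by parts in \eqref{resonanceT}, converting the bulk Dirichlet integrals $-\tfrac12\int_P \nabla u\cdot\nabla\overline v + \tfrac12\int_H\nabla u\cdot\nabla\overline v$ into boundary integrals over $\partial P$ involving the normal-derivative traces of $u$ from each side. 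Here I would use the jump relations \eqref{jump}: the interior trace gives $-\tfrac12\phi + K_P^*\phi$ and the exterior trace gives $+\tfrac12\phi + K_P^*\phi$, so the signed combination collapses to $K_P^*\phi$ on $\partial P$. Thus $(Tu,v) = (K_P^*\phi,\,\gamma v)_{L^2(\partial P)}$ for all $v$ in $W_3$, and unwinding this against the identification $S_P^{-1} = S_{\partial P}^{-1}\gamma$ from the preceding lemma, together with the self-adjointness of $S_P$ on $L^2(\partial P)$, yields exactly $Tu = S_P K_P^* S_P^{-1} u$.

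Once \eqref{ident} is in hand, compactness follows immediately from the factorization: $S_P^{-1}$ is bounded from $W_3$ onto $L^2_0(\partial P)$, $K_P^*$ is compact on $L^2_0(\partial P)$ (as recorded just before the theorem, since $\partial G/\partial n_\x$ is a continuous kernel of order zero), and $S_P$ is bounded from $L^2_0(\partial P)$ into $W_3$. The composition of a compact operator with bounded operators on either side is compact, so $T$ is compact on $W_3$. Self-adjointness of $T$ on $W_3$ is already manifest from the defining form \eqref{resonanceT}, which is Hermitian in $u,v$; alternatively it can be read off the factorization using the $L^2(\partial P)$-self-adjointness of $S_P$ and $K_P^*$.

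For the spectral inclusion, I would argue via the Rayleigh quotient \eqref{quotient}. For any nonzero $u$ in $W_3$ one has
\begin{eqnarray}
Q(u)=\frac{-\tfrac{1}{2}\int_P|\nabla u|^2+\tfrac{1}{2}\int_H|\nabla u|^2}{\int_{Y\setminus R}|\nabla u|^2},
\nonumber
\end{eqnarray}
and since the denominator equals $\int_P|\nabla u|^2 + \int_H|\nabla u|^2$, the quotient lies in the closed interval $[-1/2,1/2]$, with the endpoints attained only when one of the two Dirichlet energies vanishes. The point is that equality $Q(u)=1/2$ forces $\int_P|\nabla u|^2=0$, i.e. $u$ is constant on $P$, which places $u$ in $W_1$; similarly $Q(u)=-1/2$ forces $u$ in $W_2$. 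Because $W_3$ is by construction the orthogonal complement of $W_1\oplus W_2$ (and the only element of $W_3$ lying in either is $0$, as noted in the paragraph preceding the theorem), no nonzero $u$ in $W_3$ attains $\pm 1/2$. Hence every eigenvalue of the self-adjoint compact operator $T$ restricted to $W_3$ lies strictly inside $(-1/2,1/2)$.

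I expect the main obstacle to be the integration-by-parts step establishing \eqref{ident}. The subtlety is that $u=S_P(\phi)$ is only piecewise harmonic across $\partial P$, so one must handle the two regions $P$ and $H$ separately and keep careful track of normal-derivative orientations; the periodic corrector $\phi^*$ in $G=F+\phi^*$ and the flux normalization $\partial_{n_\x}G=|R|/|\partial R|$ on $\partial R$ in \eqref{greensfunction} must be invoked to ensure that the boundary contributions on $\partial R$ cancel (consistent with $u\in W_3$ and the vanishing boundary integral $\int_{\partial P}w\,dS$), leaving only the $\partial P$ terms. Verifying that the source term $\sum_n\delta(\x-\y+n)-1$ in the Green's function equation contributes correctly --- reproducing precisely the layer $\phi$ and its adjoint double-layer action without a spurious constant, thanks to the mean-zero constraint $\phi\in L^2_0(\partial P)$ and the projection built into $S_P$ --- is the delicate bookkeeping that makes the clean identity $Tu=S_P K_P^* S_P^{-1}u$ emerge.
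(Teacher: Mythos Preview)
Your proposal is correct and follows essentially the same route as the paper: integration by parts plus the jump relations \eqref{jump} to establish \eqref{ident}, then compactness and self-adjointness read off the factorization. Two minor points of comparison are worth recording. First, the paper organizes Step I from the opposite end --- it starts from $(S_P K_P^* S_P^{-1}u,\,v)$, integrates by parts to the boundary integral $-\int_{\partial P}K_P^*S_P^{-1}(u)\,\overline v\,dS$, and then applies \eqref{jump} once more to rewrite $K_P^*S_P^{-1}(u)=\tfrac12\partial_n u|_{-}+\tfrac12\partial_n u|_{+}$ before integrating by parts back to the form \eqref{resonanceT}; this avoids the somewhat loose ``unwinding via self-adjointness of $S_P$'' that you invoke (what is really being used is the identity $(S_P\psi,v)=-\int_{\partial P}\psi\,\overline{\gamma v}\,dS$ for $\psi\in L^2_0(\partial P)$, which is just the jump relation again). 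Second, your Rayleigh-quotient argument for the strict inclusion of eigenvalues in $(-1/2,1/2)$ is more explicit than the paper's terse Step II, and it is correct: $Q(u)=1/2$ forces $u$ constant on $P$, and the defining condition $\int_{\partial P}u\,dS=0$ for $W_3$ then forces that constant to be zero, so $u\in W_1\cap W_3=\{0\}$.
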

\noindent Since $T$ is self-adjoint, bounded, and compact: the spectrum is discrete with only zero as an accumulation point, the eigenspaces associated with distinct eignevalues are pairwise orthogonal, finite dimensional, and their union together with the null space of $T$ is $W_3$.
Theorem \ref{completeeigen} follows immediately from Theorem \ref{T} noting that the choices $\lambda=1/2$, $\lambda=-1/2$ in \eqref{resonance} correspond to $\phi$ belonging to $W_1$ and $W_2$ respectively.

We now prove Theorem \ref{T}.
\begin{proof}
\noindent Step I.

 We establish \eqref{ident}. For $u$, $v$, consider
\begin{eqnarray}
(S_P K_p^*S_P^{-1}(u),v)=\int_{Y\setminus R}\nabla S_PK_p^*S_P^{-1}(u)\cdot\nabla \overline{v}\,dx.
\label{startup}
\end{eqnarray}
Integration by parts gives
\begin{eqnarray}
(S_PK_p^*S_P^{-1}(u),v)=\int_{\partial P}\left[\frac{\partial(S_PK_p^*S_P^{-1}(u))}{\partial n_\x}\right]_+^-\overline{v}\,dS,
\label{startup1}
\end{eqnarray}
where $[q]_+^-=q|_{{\partial P}^-}-q|_{{\partial P}^+}$. Applying the jump condition \eqref{jump} we get
\begin{eqnarray}
(S_PK_p^*S_P^{-1}(u),v)=-\int_{\partial P}K_p^*S_P^{-1}(u)\overline{v}\,dS,
\label{startup2}
\end{eqnarray}
applying \eqref{jump} again gives 
\begin{eqnarray}
K_p^*S_P^{-1}(u)&=&\frac{1}{2}\frac{\partial S_PS_P^{-1}(u)}{\partial n_\x}|_{{\partial P}^-}+\frac{1}{2}\frac{\partial S_PS_P^{-1}(u)}{\partial n_\x}|_{{\partial P}^+}\nonumber\\
&=&\frac{1}{2}\frac{\partial u}{\partial n_\x}|_{{\partial P}^-}+\frac{1}{2}\frac{\partial u}{\partial n_\x}|_{{\partial P}^+}.
\label{startup3}
\end{eqnarray}
Step I now follows on substitution of \eqref{startup3} into \eqref{startup2} and integration by parts.

\noindent Step II.
The remaining properties of the operator $T$ follow directly from the representation $T=S_PK_P^*S_P^{-1}$ and the properties of $S_P$ and $K_P^*$.
\end{proof}

We conclude noting that the spectrum of $T$ is the same as the spectrum of $K_P^*$. This is stated in the following Lemma.
\begin{lemma}
\label{samespectra}
$Tu=\lambda u$ if and only if $\lambda$ corresponds to an eigenvalue of $K_P^*$.
\end{lemma}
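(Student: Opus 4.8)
The plan is to read the statement as the assertion that the similar operators $T$ and $K_P^*$ have identical point spectra, and to prove it by the obvious change of variables $\phi=S_P^{-1}u$. Everything needed is already in place: Theorem \ref{T} gives the conjugation identity $T=S_PK_P^*S_P^{-1}$, and the preceding lemma establishes that $S_P$ is a bounded bijection from $L^2_0(\partial P)$ onto $W_3$ with bounded inverse $S_P^{-1}=S_{\partial P}^{-1}\gamma$. Since conjugation by an invertible operator preserves eigenvalues, the two directions of the equivalence should fall out by direct substitution.

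First I would establish the forward implication. Assume $Tu=\lambda u$ for some nonzero $u\in W_3$ and set $\phi=S_P^{-1}u\in L^2_0(\partial P)$. Substituting $T=S_PK_P^*S_P^{-1}$ gives $S_PK_P^*\phi=\lambda S_P\phi$, and applying the continuous inverse $S_P^{-1}$ to both sides yields $K_P^*\phi=\lambda\phi$. Because $S_P$ is one-to-one, $u\neq 0$ forces $\phi\neq 0$, so $\lambda$ is a genuine eigenvalue of $K_P^*$. For the converse I would take $K_P^*\phi=\lambda\phi$ with $\phi\neq 0$ in $L^2_0(\partial P)$ and put $u=S_P\phi\in W_3$; then $S_P^{-1}u=\phi$ and $Tu=S_PK_P^*S_P^{-1}u=S_PK_P^*\phi=\lambda S_P\phi=\lambda u$, with $u\neq 0$ again by injectivity of $S_P$. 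This gives both directions of the equivalence, and in fact shows $S_P$ maps the $\lambda$-eigenspace of $K_P^*$ isomorphically onto that of $T$.

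There is no real obstacle here, since all the analytic content (the mapping and invertibility properties of $S_P$, the compactness and boundedness of $K_P^*$, and the representation $T=S_PK_P^*S_P^{-1}$) has already been supplied. The only point deserving a word of care is that nonzero eigenvectors are carried to nonzero eigenvectors under $S_P$ and $S_P^{-1}$, which is immediate from bijectivity. I would close by remarking that this lemma is what lets one transfer the spectral conclusions of Theorem \ref{T} --- discreteness, the accumulation of eigenvalues only at zero, and containment in $(-1/2,1/2)$ --- between the operator $T$ acting on $W_3$ and the boundary integral operator $K_P^*$ acting on $L^2_0(\partial P)$, the latter being the more convenient object for explicit computation of the generalized electrostatic resonances.
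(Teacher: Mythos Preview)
Your proof is correct and follows essentially the same approach as the paper: both use the conjugation identity $T=S_PK_P^*S_P^{-1}$ from Theorem \ref{T} together with the bijectivity of $S_P$ to transfer eigenvectors back and forth. The only cosmetic difference is that in the converse direction the paper invokes surjectivity of the trace map to produce $u$ with $S_P^{-1}u=w$, whereas you simply set $u=S_P\phi$ directly; since $S_P$ is a bijection these are the same, and your phrasing is arguably cleaner.
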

\begin{proof}
If a pair $(\lambda,u)$ belonging to $(-1/2,1/2)\times W_3$ satisfies $Tu=\lambda u$ then  $S_PK_P^*S_P^{-1}u=\lambda u$. Multiplication of both sides by $S_P^{-1}$ shows that $S_P^{-1}u$ is an eigenfunction for $K_P^*$ associated with $\lambda$.
Suppose the pair $(\lambda,w)$ belongs to $(-1/2,1/2)\times L_0^2(\partial P)$ and satisfies $K_P^*w=\lambda w$. Since the trace map from $W_3$ to $H^{1/2}(\partial P)\cap L_0^2(\partial P)$ is onto then there is a $u$ in $W_3$ for which $w=S_P^{-1}u$ and $K_P^*S_P^{-1}u=\lambda S_P^{-1}u$. Multiplication of this identity by $S_P$ shows that $u$ is an eigenfunction for $T$ associated with $\lambda$.
\end{proof}

\section{Existence for exterior problems with a dielectric permitivity in $\mathbb{C}$}
\label{exist}

In this section we establish Theorem \ref{existencetheoremforoutsideR}.
Recall that  any element $u$ of $H_{per}^1(Y\setminus R)/\mathbb{C}$ can be written as
\begin{eqnarray}
u=\mathcal{P}_1 u + \mathcal{P}_2 u +\sum_{-\frac{1}{2}<\lambda_n<\frac{1}{2}} \mathcal{P}_{\lambda_n} u +D.
\label{expandHper}
\end{eqnarray}
where $D$ is chosen such that $\int_{Y\setminus R} u \, dy=0$. Here $\mathcal{P}_i$ are the orthgonal projections onto $W_i$, $i=1,2$ and $\mathcal{P}_{\lambda_n}$ are the orthogonal projections  associated with  $\lambda_n$ in $(-1/2,1/2)$.
The orthogonal decomposition \eqref{expandHper} is used in the proof of Theorem \ref{existencetheoremforoutsideR} given below.

\begin{proof}

For  $u$, $v$ in $H_{per}^1(Y\setminus R)/\mathbb{C}$ we apply \eqref{expandHper} to see that
{\footnotesize
\begin{eqnarray}
 B_z(u,v) &=&\sum_{i=1}^2 B_z(\mathcal{P}_i u,\mathcal{P}_i v)+\sum_{-\frac{1}{2}<\lambda_n<\frac{1}{2}} B_z(\mathcal{P}_{\lambda_n} u, \mathcal{P}_{\lambda_n} v)\nonumber\\
 &=&(\mathcal{P}_1 u,v)+z(\mathcal{P}_2 u,v)+\sum_{-\frac{1}{2}<\lambda_n<\frac{1}{2}} (1+(z-1)(\frac{1}{2}-\lambda_n))(\mathcal{P}_{\lambda_n} u,v)\nonumber\\
 &=&(T_z u,v).
\label{bilinearform}
\end{eqnarray}}
From \eqref{bilinearform} we conclude that
\begin{eqnarray}
T_z =\mathcal{P}_1 +z \mathcal{P}_2 +\sum_{-\frac{1}{2}<\lambda_n<\frac{1}{2}}  (1+(z-1)(\frac{1}{2}-\lambda_n))\mathcal{P}_{\lambda_n}.
\label{Tz2}
\end{eqnarray}
It is evident from \eqref{Tz2} that for $z\not=(\lambda_i+1/2)/(\lambda_i-1/2)$ that $T_z$ is a bounded one to one and onto map in $H_{per}^1(Y\setminus R)/\mathbb{C}$. The formula for $T_z^{-1}$ is given by
\begin{eqnarray}
T_z^{-1}=\mathcal{P}_1 +z^{-1} \mathcal{P}_2 +\sum_{-\frac{1}{2}<\lambda_n<\frac{1}{2}}  (1+(z-1)(\frac{1}{2}-\lambda_n))^{-1}\mathcal{P}_{\lambda_n}.
\label{tzinverse}
\end{eqnarray}
Taking conjugates on both sides of \eqref{bilinearform} gives $\overline{B_z(u,v)}=(v,T_z u)$ and choosing $u=T_z^{-1}q$ for $q$ in $H_{per}^1(Y\setminus R)/\mathbb{C}$ delivers the identity

\begin{eqnarray}
\label{conjidenty}
\overline{B_z(T_z^{-1}q,v)}=(v,q).
\end{eqnarray}

To complete the proof consider any linear functional  $F$ in $[H_{per}^1(Y\setminus R)/\mathbb{C}]^*$ with $F(v)=0$ for $v=const.$
Applying the Reisz representation theorem shows that there exists a unique solution $u$ in $H_{per}^1(Y\setminus R)/\mathbb{C}$ of 
\begin{eqnarray}
B_z(u,v)=\overline{F(v)}, \hbox{ for all $v$ in $H_{per}^1(Y\setminus R)/\mathbb{C}$},
\label{finishproof}
\end{eqnarray}
\end{proof}
\noindent and Theorem \ref{existencetheoremforoutsideR} is proved.

\section{Convergence of the Power Series}
\label{convergence}

In this section we show that the series \eqref{xipowerseries}, \eqref{convgncee}  identified in Theorems \ref{convergenceofxi} and \ref{summable} are convergent. Here the convergence radius $R$ depends upon the branch of the quasistatic dispersion relation $\xi_0^n=\xi_0^n(\tau,\hat{\kappa})$.   The methodology applied here uses generating functions and follows the approach presented in \cite{FLS2}.

\noindent{Step I. {\em A priori estimates.}}

We establish a priori bounds for the solutions of \eqref{Rproblem} and \eqref{varformpsimprimeYminusR}. Before proceeding recall the Friderich's inequality  and trace estimate satisfied by elements $\psi$ of
$H^1_{per}(Y\setminus R)/\mathbb{C}$ given by 
\begin{eqnarray}
\Vert\psi\Vert_{Y\setminus R}=\left(\int_{Y\setminus R} |\nabla \psi|^2+|\psi|^2\,dy\right)^{1/2}\leq \Omega\Vert\psi\Vert
\label{Fredrichs}
\end{eqnarray}
and 
\begin{eqnarray}
\Vert \psi\Vert_{H^{1/2}(\partial R)}\leq A \Vert\psi\Vert
\label{trace}
\end{eqnarray}
where $\Omega$ and $A$ are positive constants depending on $R$ and $\Vert\psi\Vert$ is the norm on $H_{per}^1(Y\setminus R)/\mathbb{C}$ defined by \eqref{innerproduct}.

The solutions $\tilde{\psi}$ of \eqref{Rproblem} solve boundary value problems of the following generic form. Given $\psi\in H^1_{per}(Y\setminus R)$, $G  \in L^2(R) $ and $F\in [L^2(R)]^2$  the function $\tilde{\psi}$ is the $H^1(R)$ solution of 
\begin{equation}
 \begin{cases}
  \int_R (\nabla\tilde{\psi}\cdot\nabla \overline v-\xi_0 \psi\overline v)=\int_R G\overline v+\int_R F\cdot\nabla \overline v ~~~~~~ \forall v\in H_0^1(R)\\
\tilde{\psi}|_{\partial R-}=\psi|_{\partial R+} .
 \end{cases}
\end{equation}
Decompose $\tilde{\psi}$ according to $\tilde{\psi}=\psi_0+\psi_1+\psi_2$ such that
{\footnotesize
\begin{equation}
 \begin{cases}
  \int_R (\nabla\psi_0\cdot\nabla \overline v-\xi_0 \psi_0\overline v)=\int_R \left [G+(1+\xi_0)\psi_1
+(1+\xi_0)\psi_2\right]\overline v ~~\forall v\in H_0^1(R)\\
\\
\psi_0|_{\partial R-}=0,
 \end{cases}
\label{psi_0}
\end{equation}}
\begin{equation}
 \begin{cases}
  \int_R (\nabla\psi_1\cdot\nabla \overline v +\psi_1\overline v)=0 ~~~\forall v\in H_0^1(R)\\
\\  
\psi_1|_{\partial R-}=\psi|_{\partial R+},
 \end{cases}
\label{psi_1}
\end{equation}
and
\begin{equation}
 \begin{cases}
  \int_R (\nabla\psi_2\cdot\nabla \overline v + \psi_2\overline v)=\int_R F\cdot\nabla \overline v ~~~~~~ \forall v\in H_0^1(R)\\
\\
\psi_2|_{\partial R-}=0.
 \end{cases}
\label{psi_2}
\end{equation}
Let $J=G+(1+\xi_0)\psi_1+(1+\xi_0)\psi_2$ denote the  right hand side for (\ref{psi_0}).
Expanding $\psi_0$ and $J$ with respect to the complete orthonormal set of Dirichlet eigenfunctions $\{\phi_j\}_{j=1}^\infty$, $\{\nu_j\}_{j=1}^\infty$ gives 
\begin{eqnarray}
\Vert \psi_0\Vert_{H^1(R)}^2=\sum_{j=1}^\infty(1+\nu_j)\left|\frac{\langle J \psi_j\rangle_R}{(\xi_0-\nu_j)}\right|^2\leq C_{1,n}^2\Vert J\Vert_{L^2(R)}^2,
\label{expr}
\end{eqnarray}
where 
\begin{eqnarray}
C_{1,n}=\max_{\xi_0\in I_n}\left\{max_j \left\{\frac{(1+\nu_j)^{1/2}}{|\xi_0-\nu_j|}\right\}\right\}.
\label{intest}
\end{eqnarray}
Collecting results gives
\begin{eqnarray}
 ||\psi_1||_{H^1(R)}&=& ||\psi_2||_{H^{1/2}(\partial R)}\leq A ||\psi||\nonumber
\\||\psi_2||_{H^1(R)}&\leq &||F||_{L^2(R)} \nonumber
\\||\psi_0||_{H^1(R)}&\leq & C_{1,n}||G+(1+\xi)\psi_1+(1+\xi)\psi_2||_{L^2(R)}
\end{eqnarray}
and
\begin{eqnarray}
 ||\tilde{\psi}||_{H^1(R)}\leq B_n( ||G||_{L^2(R)}+||F||_{L^2(R)}+||\psi||),
\label{boundnessforpsiinR}
\end{eqnarray}
where 
\begin{eqnarray}
B_n=\max_{\xi_0\in I_n}\{ C_{1,n} ,C_{1,n} (1+\xi_0), A(C_{1,n}(1+\xi_0)+1)\}.
\label{Kn}
\end{eqnarray}

Now we estimate the field $\psi_*$. The explicit expression (\ref{psi*}) for $\psi_*$ gives
\begin{eqnarray}
 ||\psi_*||^2_{H^1(R)}=\sum_{n=1}^\infty\frac{(1+\mu_n)\mu_n^2<\phi_n>^2_R}{(\mu_n-\xi_0)^4},
 \label{psi*norm}
\end{eqnarray}
and
\begin{eqnarray}
 ||\psi_*||_{H^1(R)}\leq  L_n.
\label{boundnessforpsistarinR}
\end{eqnarray}
where  $L_{n}$ is the maximum of the right hand side of \eqref{psi*norm} for $\xi_0(\tau,\hat\kappa )\in  I_n$.

Next we provide an upper bound for the solutions of $\psi_m'$ of \eqref{varformpsimprimeYminusR}.  We may continuously extend elements $v$ in $H_{per}^1(Y\setminus R)/\mathbb{C}$ onto $R$ as $H_{per}^1(Y)$ functions such that the extension $v^{ext}$ satisfies
\begin{eqnarray}
\Vert v^{ext}\Vert_Y\leq C\Vert v\Vert.
\label{extchion}
\end{eqnarray}
In what follows we continue to denote these extensions by $v$ and \eqref{varformpsimprimeYminusR} takes the equivalent form \eqref{varformpsimprime}. Application of H\"olders inequality to the right hand side of \eqref{varformpsimprime} gives
{\footnotesize
\begin{eqnarray}
&&\tau^2|B_z(\psi_m',v)|\nonumber
\\&&\leq C\left(\Vert F_1\Vert_{L^2(Y\setminus R)} +\Vert G_1\Vert_{L^2(Y\setminus R)} +\Vert F_2\Vert_{L^2(R)} +\Vert G_2\Vert_{L^2(R)}\right)\Vert v\Vert.
\label{upest1}
\end{eqnarray}}
\par
It is evident that  $B_z(\psi_m',v)=(T_z \psi_m',v)$  and choosing $v=T_{\overline{z}}^{-1}\psi_m'$ gives $B_z(\psi_m',v)=\Vert \psi_m'\Vert^2, \Vert v\Vert\leq G_z\Vert \psi_m'\Vert$ and delivers the estimate
{\footnotesize
\begin{eqnarray}
\tau^2\Vert\psi_m'\Vert^2&=&
\tau^2|B_z(\psi_m',v)|\nonumber\\
 &\leq& C\times G_z(\Vert F_1\Vert_{L^2(Y\setminus R)} +\Vert G_1\Vert_{L^2(Y\setminus R)} \label{Blowerestimate}
 \\&&+\Vert F_2\Vert_{L^2(R)} +\Vert G_2\Vert_{L^2(R)})\Vert \psi_m'\Vert\nonumber
\end{eqnarray}}
with
\begin{eqnarray}
G_z=\max\left\{1,|z|^{-1},\sup_{-\frac{1}{2}<\lambda_n<\frac{1}{2}}\{|1+(z-1)(1/2-\lambda_n)|\}^{-1}\right\}.
\label{constzlowB}
\end{eqnarray}
For $z=\epsilon_p(\xi_0)$ we maximize \eqref{constzlowB} for $\xi_0\in I_n$  to obtain 
{\footnotesize
\begin{eqnarray}
 \tau^2\Vert\psi_m'\Vert\leq G_n\left(\Vert F_1\Vert_{L^2(Y\setminus R)} +\Vert G_1\Vert_{L^2(Y\setminus R)} +\Vert F_2\Vert_{L^2(R)} +\Vert G_2\Vert_{L^2(R)}\right).
 \label{upperprimeest1}
\end{eqnarray}}
Applying  Freidrich's inequality we arrive at the desired upper bound given by
{\footnotesize
\begin{eqnarray}
 \tau^2\Vert\psi_m'\Vert_{Y\setminus R}\leq E_n\left(\Vert F_1\Vert_{L^2(Y\setminus R)} +\Vert G_1\Vert_{L^2(Y\setminus R)} +\Vert F_2\Vert_{L^2(R)} +\Vert G_2\Vert_{L^2(R)}\right),
 \label{boundnessforpsiooutsideR}
\end{eqnarray}}
where $E_n$ denotes a generic constant depending only on $I_n$.

The a priori estimate for $\hat{\psi}$ follows from the representation formula \eqref{psihatformula}
and
\begin{eqnarray}
\Vert \hat{\psi}\Vert_{Y\setminus R}\leq H_n,
\label{hatpsiest}
\end{eqnarray}
where the constant $H_n$ depends only on $I_n$.

\noindent{Step II. \em{System of inequalities.}}

We apply the a priori estimates developed in Step I to the system of equations \eqref{Rproblem}, \eqref{varformpsimprimeYminusR}, and solvability conditions \eqref{iterationforxim1} derived in section \ref{higherorder}.
From \eqref{varformpsimprime} and (\ref{boundnessforpsiooutsideR}), it follows that 
{\footnotesize
\begin{eqnarray}
 \lefteqn{\tau^2\Vert\psi_m'\Vert_{Y\setminus R}\leq}\nonumber
\\ &&E_{n}\big[\tau^2(\sum_{l=1}^{m-2}|\xi_l|\Vert\psi_{m-l}\Vert_{Y\setminus R}+\sum_{l=1}^{m-2}|\xi_l|\Vert\psi_{m-1-l}\Vert_{Y\setminus R}+\sum_{l=1}^{m-2}|\xi_l|\Vert\psi_{m-2-l}\Vert_{Y\setminus R}\nonumber
\\&&+\Vert\psi_{m-1}\Vert_{Y\setminus R}+\Vert\psi_{m-2}\Vert_{Y\setminus R})+\sum_{l=0}^{m-2}|\xi_l|\Vert\psi_{m-2-l}\Vert_{H^1(R)}+\sum_{l=0}^{m-3}|\xi_l|\Vert\psi_{m-3-l}\Vert_{H^1(R)}\nonumber
\\&&+\sum_{l=1}^{m-4}|\xi_l|\Vert\psi_{m-4-l}\Vert_{H^1(R)}+\Vert\psi_{m-2}\Vert_{H^1(R)}+\Vert\psi_{m-3}\Vert_{H^1(R)}+\Vert\psi_{m-4}\Vert_{H^1(R)} \nonumber
\\&&+\sum_{l=0}^{m-2}\sum_{n=0}^l|\xi_{m-2-l}||\xi_n|(\Vert\psi_{l-n}\Vert_{Y\setminus R}+\Vert\psi_{l-n}\Vert_{H^1(R)})\nonumber
\\&&+\sum_{l=0}^{m-2}|\xi_l|(\Vert\psi_{m-2-1}\Vert_{Y\setminus R}+\Vert\psi_{m-2-1}\Vert_{H^1(R)})\big].
\label{bound1}
\end{eqnarray}}
From \eqref{Rproblem}  and (\ref{boundnessforpsiinR}), it follows that 
{\footnotesize
\begin{eqnarray}
\lefteqn{||\tilde\psi_{m}||_{H^1(R)}\leq }\nonumber
\\&&{B_n}\big[\sum_{l=1}^{m-1}\xi_l||\psi_{m-l}||_{H^1(R)}+\sum_{l=0}^{m-1}\xi_l||\psi_{m-1-l}||_{H^1(R)}\nonumber
\\&&+\sum_{l=0}^{m-2}\xi_l||\psi_{m-2-l}||_{H^1(R)}+||\psi_{m-1}||_{H^1(R)}+||\psi_{m-2}||_{H^1(R)}\nonumber
\\&&+\sum_{l=1}^{m-1}\sum_{n=0}^l\xi_{m-l}\xi_n||\psi_{l-n}||_{H^1(R)}+\sum_{l=1}^{m-1}\xi_l||\psi_{m-l}||_{H^1(R)}+||\psi_{m}||_{Y\setminus R}\big ]
\label{bound2}
\end{eqnarray}}

The solvability constant $\mathcal{G}$ in \eqref{iterationforxim1} is bounded away from zero and $\infty$ for $\xi_0\in I_n$  
Put $m-2\mapsto m$ in \eqref{iterationforxim1} to obtain the inequality for $|\xi_m|$. The inequality is in terms of a constant $C_n$ depending only on $I_n$ and is given by
{\footnotesize
\begin{eqnarray}
\lefteqn{|\xi_m|\leq}\nonumber
\\&& C_n \big [\tau^2\big(\sum_{l=0}^{m-1}|\xi_l|\Vert\psi_{m+1-l}\Vert_{Y\setminus R}+\sum_{l=0}^{m-1}|\xi_l|\Vert\psi_{m-l}\Vert_{Y\setminus R}+2\Vert\psi_{m+1}'\Vert_{Y\setminus R}+\Vert\psi_{m}\Vert_{Y\setminus R}\big)\nonumber
\\&&+\sum_{l=0}^{m-1}|\xi_l|\Vert\psi_{m-1-l}\Vert_{H^1(R)}+\sum_{l=0}^{m-2}|\xi_l|\Vert\psi_{m-2-l}\Vert_{H^1(R)}\nonumber
\\&&+\sum_{l=1}^{m-1}\sum_{n=0}^l|\xi_{m-l}||\xi_n|(\Vert\psi_{l-n}\Vert_{H^1(R)}+\Vert\psi_{l-n}\Vert_{Y\setminus R})\label{bound3}
\\&&+2\sum_{l=1}^{m-1}|\xi_l|(\Vert\psi_{m-l}\Vert_{H^1(R)}+\Vert\psi_{m-l}\Vert_{Y\setminus R})\nonumber
\\&&+(\Vert\tilde\psi_{m}\Vert_{H^1(R)}+\Vert\psi_{m}\Vert_{Y\setminus R})+(\Vert\tilde\psi_{m}\Vert_{H^1(R)}+\Vert\psi_{m}\Vert_{Y\setminus R})
\big]\nonumber
\end{eqnarray}}
The decompositions (\ref{decomp}), \eqref{decompp} and bounds (\ref{boundnessforpsistarinR}), \eqref{hatpsiest} give
\begin{eqnarray}
 \Vert\psi_m\Vert_{H^1(R)}&\leq & \Vert\tilde\psi_{m}\Vert_{H^1(R)}+L_n |\xi_m| ~~\hbox{ and}
\label{bound4}\\
\Vert \psi_m\Vert_{Y\setminus R}&\leq& \Vert\psi_m'\Vert_{Y\setminus R}+ H_n|\xi_{m-1}|.
\label{bound55}
\end{eqnarray}
\par
Here the upper bounds \eqref{bound1}, \eqref{bound2} \eqref{bound3}, \eqref{bound4}, and \eqref{bound55} hold for all values of $\tau$ for which the branch of the quasistatic dispersion relation $\xi_n^0$ lies in interval $I_n$.

\noindent{Step III. {\em Majorizing sequence and analyticity of generating functions.}

We simplify the exposition by introducing
\begin{eqnarray}
\overline p_m &=& ||\tau^m\psi_m||_{Y\setminus R}\\
p_m &=& ||\tau^m\psi_m||_{H^1(R)}\\
\tilde p_m &=& ||\tau^m\tilde\psi_m||_{H^1(R)}\\
p_m' &=& ||\tau^m\psi_m'||_{Y\setminus R}\\
s_m &=& |\tau^m\xi_m|,
\end{eqnarray}
and show that the corresponding series $\sum \rho^m\overline p_m$, $\sum \rho^m p_m$, $\sum \rho^m \tilde p_m$, $\sum \rho^m p_m'$, and $\sum \rho^m s_m$ 
converge. This is sufficient to establish summability and convergence for the series \eqref{convgncee} and \eqref{xipowerseries}. 
Writing \eqref{bound1}, \eqref{bound2} \eqref{bound3}, \eqref{bound4}, and \eqref{bound55} in terms of the new notation gives the following system of inequalities.
For $m\geq 2$ we have:
{\footnotesize
\begin{eqnarray}
\lefteqn{ p_{m}'\leq} \nonumber
\\&& E_n \big[ \sum_{l=1}^{m-2}s_l\overline p_{m-l}+\tau\sum_{l=0}^{m-2}s_l\overline p_{m-1-l}+ +\tau^2\sum_{l=0}^{m-2}s_l\overline p_{m-2-l}\nonumber
\\&&+\tau\overline p_{m-1}+\tau^2\overline p_{m-2}+\sum_{l=0}^{m-2}s_l p_{m-2-l}+\tau\sum_{l=0}^{m-3}s_l p_{m-3-l}+\tau^2\sum_{l=0}^{m-4}s_lp_{m-4-l}\nonumber
\\&&+p_{m-2}+\tau p_{m-3}+\tau^2 p_{m-4}+\sum_{l=0}^{m-2}\sum_{n=0}^ls_{m-2-l}s_n(\overline p_{l-n}+p_{l-n})\nonumber
\\&&+\sum_{l=0}^{m-2}s_l(\overline p_{m-2-l}+p_{m-2-l})
\big],
\end{eqnarray}}
and for $m\geq 1$,
{\footnotesize
\begin{eqnarray}
 \lefteqn{\tilde p_m\leq }\nonumber
\\&&B_n\big[\sum_{l=1}^{m-1}s_lp_{m-l}+\tau\sum_{l=0}^{m-1}s_lp_{m-1-l}+\tau^2\sum_{l=0}^{m-2}s_lp_{m-2-l}+\tau p_{m-1}\nonumber
\\&&+\tau^2 p_{m-2}+\sum_{l=1}^{m-1}\sum_{n=0}^ls_{m-l}s_n p_{l-n}+\sum_{l=1}^{m-1}s_lp_{m-l}+\overline p_m
\big].
\end{eqnarray}}
The bounds (\ref{bound3}), (\ref{bound4}), and \eqref{bound55} yield the following bounds for $m\geq 1$:
{\footnotesize
\begin{eqnarray}
 \lefteqn{s_m\leq }\nonumber
\\&&C_n \big[ 2\tau p_{m+1}'+\tau \sum_{l=1}^{m-1}s_l\overline p_{m+1-l}+\tau^2\sum_{l=0}^{m-1}s_l\overline p_{m-l}\nonumber
\\&&+\tau \sum_{l=0}^{m-1}s_l p_{m-1-l}+\tau^2\sum_{l=0}^{m-2}s_l p_{m-2-l}+\sum_{l=0}^{m-3}\sum_{n=0}^ls_{m-l}s_n(\overline p_{l-n}+p_{l-n})\nonumber
\\&&+2\sum_{l=1}^{m}s_l(\overline p_{m-l}+p_{m-l})+2\tilde p_m+ (2+\tau^2)\overline p_{m}\big],
\end{eqnarray}}
\begin{eqnarray}
p_m&\leq& \tilde p_m+L_n s_m,\label{pmbd}\\
\overline{p}_m&\leq& p_m'+\tau H_n s_{m-1}.\label{pmoverlinebd}
\end{eqnarray}
The  inequalities presented above hold for all $\tau$ for which $\xi_0^n$ belongs to $I_n$. In what follows it is convenient to increase the upper bounds by choosing the maximum value of $\tau$ for $\xi_0^n$ in $I_n$ and absorbing this value into the constants $B_n,C_n,E_n$  
Additionally any constant factors other than unity that multiply terms in these upper bounds are absorbed into these constants.  With these choices the system is written:
{\footnotesize
\begin{eqnarray}
\lefteqn{ p_{m}'\leq} \label{pprimebd}
\\&& E_n \big[ \sum_{l=1}^{m-2}s_l\overline p_{m-l}+\sum_{l=0}^{m-2}s_l\overline p_{m-1-l}+ +\sum_{l=0}^{m-2}s_l\overline p_{m-2-l}\nonumber
\\&&+\overline p_{m-1}+\overline p_{m-2}+\sum_{l=0}^{m-2}s_l p_{m-2-l}+\sum_{l=0}^{m-3}s_l p_{m-3-l}+\sum_{l=0}^{m-4}s_lp_{m-4-l}\nonumber
\\&&+p_{m-2}+ p_{m-3}+ p_{m-4}+\sum_{l=0}^{m-2}\sum_{n=0}^ls_{m-2-l}s_n(\overline p_{l-n}+p_{l-n})\nonumber
\\&&+\sum_{l=0}^{m-2}s_l(\overline p_{m-2-l}+p_{m-2-l})
\big],\nonumber
\end{eqnarray}}
{\footnotesize
\begin{eqnarray}
 \lefteqn{\tilde p_m\leq }\label{tpmbd}
\\&&B_n\big[\sum_{l=1}^{m-1}s_lp_{m-l}+\sum_{l=0}^{m-1}s_lp_{m-1-l}+\sum_{l=0}^{m-2}s_lp_{m-2-l}+ p_{m-1}+ p_{m-2}\nonumber
\\&&+\sum_{l=1}^{m-1}\sum_{n=0}^ls_{m-l}s_n p_{l-n}+\sum_{l=1}^{m-1}s_lp_{m-l}+\overline p_m
\big],\nonumber
\end{eqnarray}}
{\footnotesize
\begin{eqnarray}
 \lefteqn{s_m\leq }\label{smbd}
\\&&C_n \big[  p_{m+1}'+ \sum_{l=1}^{m-1}s_l\overline p_{m+1-l}+\sum_{l=0}^{m-1}s_l\overline p_{m-l}\nonumber
\\&&+ \sum_{l=0}^{m-1}s_l p_{m-1-l}+\sum_{l=0}^{m-2}s_l p_{m-2-l}+\sum_{l=0}^{m-3}\sum_{n=0}^ls_{m-l}s_n(\overline p_{l-n}+p_{l-n})\nonumber
\\&&+\sum_{l=1}^{m}s_l(\overline p_{m-l}+p_{m-l})+\tilde p_m+ \overline p_{m}\big],\nonumber
\end{eqnarray}}
\begin{eqnarray}
p_m&\leq& C_4(\tilde p_m+ s_m),\label{pmbd2}\\
\overline{p}_m&\leq& C_5(p_m'+ s_{m-1}),\label{pmoverlinebd2}
\end{eqnarray}
where $C_4$, $C_5$ are positive constants depending only on $I_n$ and $m\geq 1$.

We now introduce a majorizing sequence $\{(a_m,b_m,c_m,d_m,e_m)\}_{m=0}^\infty$ for which
\begin{eqnarray}
\sum_{n=0}^\infty a_n\rho^n=&\rho a(\rho)+\overline{p}_0&\geq\sum_{n=0}^\infty \overline{p}_n\rho^n\nonumber\\
\sum_{n=1}^\infty b_n\rho^n=&b(\rho)&\geq\sum_{n=1}^\infty \tilde{p}_n\rho^n\nonumber\\
\sum_{n=0}^\infty c_n\rho^n=&c(\rho)&\geq\sum_{n=0}^\infty s_n\rho^n\nonumber\\
\sum_{n=0}^\infty d_n\rho^n=&d(\rho)&\geq\sum_{n=0}^\infty p_n\rho^n\nonumber\\
\sum_{n=2}^\infty e_n\rho^n=&\rho e(\rho)&\geq\sum_{n=2}^\infty p_n'\rho^n, \label{majorant}
\end{eqnarray}
to show that the generating functions $a(\rho),\ldots,e(\rho)$ are analytic in a neighborhood of the origin $\rho=0$.

The majorizing sequence is chosen so that the system of inequalities \eqref{pprimebd}, \eqref{tpmbd}, \eqref{smbd}, \eqref{pmbd2}, \eqref{pmoverlinebd2}
hold with equality for $a_m=\overline{p}_m$, $b_m=\tilde{p}_m$, $c_m=s_m$, $p_m=d_m$, and $e_m=p_m'$.
Indeed, for this choice one observes that $\overline p_m\leq  a_m$, $\tilde p_m\leq  b_m$ , $s_m\leq  c_m$, $p_m\leq d_m$, and $p_m'\leq e_n$. 
Enforcing equality in  \eqref{pprimebd}, \eqref{tpmbd}, \eqref{smbd}, \eqref{pmbd2}, \eqref{pmoverlinebd2}, multiplying each by the appropriate power of $\rho$ and summation delivers the equivalent  system for the generating functions $a(\rho), b(\rho), c(\rho), d(\rho), e(\rho)$ given by:
\begin{eqnarray}
A_i(a,b,c,d,e,\rho)=0, \hbox{  $i=1,\ldots 5$},
\end{eqnarray}
where:
\begin{eqnarray}
A_1(a,b,c,d,e,\rho)&=&\overline{p}_1-a+C_5(e+(c-s_0)),\label{A1}
\end{eqnarray}
\begin{eqnarray}
 \lefteqn{A_2(a,b,c,d,e,\rho)=}\label{A2}
\\&& -b+B_n\big[(c-s_0)(d-p_0)+(cd+d)(\rho+\rho^2)+c^2 d-s_0^2 p_0-\rho s_0 s_1 (p_0+p_1)\nonumber
\\&&-s_0 p_0(c-s_0)-s_0 c d+s_0(\rho(s_0 p_1+s_1 p_0)+s_0 p_0)+(c-s_0)(d-p_0)+\rho a
\big]\nonumber
\end{eqnarray}
{\footnotesize
\begin{eqnarray}
 \lefteqn{A_3(a,b,c,d,e,\rho)=}\label{A3}
\\&&-(c-s_0)+C_n\big[ (a-\overline{p}_1)(c-s_0)+e+\rho a c+\rho(a+cd)\nonumber
\\&& +\rho^2c d+c^2(\rho a+\overline{p}_0)-s_0 \overline{p}_0 c-s_0((\rho a+\overline{p}_0)c-s_0\overline{p}_0)+(c-s_0)(d-p_0)+b\nonumber
\\&&-\rho(\rho a+\overline{p}_0)c s_1-\rho^2s_2(\rho a+\overline{p}_0)c+c^2d-s_0p_0 c-s_0(c d-s_0 p_0)-\rho cd (s_1+\rho s_2)\nonumber
\big]
\end{eqnarray}}
\begin{eqnarray}
 A_4(a,b,c,d,e,\rho)= (d-p_0)+C_4(b+(c-s_0)).\label{A4}
\end{eqnarray}
\begin{eqnarray}
 \lefteqn{A_5(a,b,c,d,e,\rho)=}\label{A5}
\\&&-e+E_n\big[ (a-\overline{p}_1)(c-s_0)+\rho ac+\rho(\rho a +\overline{p}_0)c +\rho a+\rho(\rho a+\overline{p}_0)\nonumber
\\&& +(c+1)d(\rho+\rho^2+\rho^3)+\rho c(c+1)(\rho a+\overline{p}_0+d)\nonumber
\big]
\end{eqnarray}
One can check $A_1,A_2,A_3,A_4$ and $A_5$ vanish for $a=\overline{p}_1=\Vert \psi_1\Vert_{Y\setminus R}$, $b=0$, $c=s_0=|\xi_0^n|$, $d=p_0=\Vert \psi_0\Vert_{H^1(R)}$, $e=0$, and $\rho=0$. Moreover, the determinant of the Jacobian matrix with respect to the first five variables at 
this point, i.e., $(a,b,c,d,e,\rho)=(\overline{p}_1,0,s_0,p_0,0,0)$, is 
\begin{eqnarray}
 det\frac{\partial (A_1,A_2,A_3,A_4,A_5)}{\partial(a,b,c,d,e)}(\overline{p}_1,0,s_0,p_0,0,0)=1.
\end{eqnarray}
Thus the implicit function theorem for functions of several complex variables guarantees that the generating functions $a,b,c,d,e$ are analytic within a neighborhood of $\rho=0$ and we conclude that the series on the right hand side of the inequalities \eqref{majorant}  are convergent.
Here $\overline{p}_0=\Vert \psi_0\Vert_{Y\setminus R}=|Y\setminus R|^{1/2}$ and the initial data $p_0$, $\overline{p}_1$ are bounded functions of $\xi_0^n$ on $I_n$. Choosing  the maximum values of  $p_0$, $\overline{p}_1$, $\xi_0^n$, as $\xi_0^n$ varies over $I_n$ delivers a majorizing sequence with radius of convergence $R$ depending only on $I_n$. It now follows that the series \eqref{xipowerseries} converges  and that \eqref{convgncee} is summable for $0\leq\rho<R$.


\section{Power Series Solution of Cell Problem }
\label{powerseriessolutionofcellproblem}
In this section we show that the functions defined by the power series solve \eqref{variational2}.  For $\xi_0^n$ in $I_n$ recall the series \eqref{upower}, \eqref{xipowerexpansion}
\begin{eqnarray*}
 &&u=\sum_{m=0}^\infty \eta^m u_m \label{upowerrho}
\\&&\xi=\sum_{m=0}^\infty\eta^m\xi_m \label{xipowerexpansionrho}
\end{eqnarray*}
with  $u_m=i^m\underline{u}_0\psi_m$ and $\eta=\rho\tau$ converge for $0<\rho<R$. 
For $v\in H^1_{per}(Y) $ and $0\leq \rho< R$. We define
{\footnotesize
\begin{eqnarray}
&&a^\eta(v):=\int_H\tau^2(\xi-\epsilon_r\frac{\omega_p^2}{c^2})(\nabla_y+i\eta \hat{\kappa})u\cdot \overline{(\nabla_y+i\eta \hat{\kappa})v }\nonumber
\\&&+\int_P\tau^2\xi(\nabla_y+i\eta \hat{\kappa})u\cdot \overline{(\nabla_y+i\eta \hat{\kappa})v }\nonumber
\\&&+\int_R\eta^2(\xi-\epsilon_r\frac{\omega_p^2}{c^2})(\nabla_y+i\eta \hat{\kappa})u\cdot \overline{(\nabla_y+i\eta \hat{\kappa})v }-\int_Y\eta^2\xi(\xi-\epsilon_r\frac{\omega_p^2}{c^2})u\overline{v} .
\end{eqnarray}}
First observe that $a^\eta(v)$ has a convergent power series in $\eta$ that is obtained by inserting $u$ and $\xi$ into the above expression and expanding in powers of $\eta$ . The coefficients of this expansion are exactly the left-hand side of equation (\ref{summation2}). Since  $\psi_m$ satisfies (\ref{summation2}), all coefficients of the power series expansion of $a^\eta(v)$ are zero and we conclude that $a^\eta(v)=0$ for all $v\in H^1_{per}(Y)$, $0\leq \rho< R$.   Thus we conclude that the pair $(u,\xi)$ satisfies (\ref{variational2}). 

Next we record the following property of the sequence $\{\xi_m\}$ that follows directly from the variational formulation of the problem and the power series solution.

\begin{theorem}
The functions  $\xi_m$ are real for all $m$.
\label{realvalue}
\end{theorem}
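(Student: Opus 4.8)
The plan is to prove reality of the coefficients $\xi_l^n$ in two stages: first show that the summed eigenvalue $\xi$ is real for all sufficiently small real $\rho$, and then use analyticity of the series in Theorem \ref{convergenceofxi} to transfer reality from the function to its Taylor coefficients. Write $q=\epsilon_r\omega_p^2/c^2>0$. Recall from Section \ref{powerseriessolutionofcellproblem} that the pair $(u,\xi)$ assembled from the power series satisfies \eqref{variational2} for all $v\in H^1_{per}(Y)$ and all $0\le\rho<R$. A direct substitution shows that \eqref{variational2} is exactly $\tau^2(\xi-q)$ times the undivided weak form \eqref{variational}. Since on the branch under consideration $\xi$ depends continuously on $\rho$ with $\xi(0)=\xi_0^n\in I_n$ and $\xi_0^n\neq q$ (the value $q$ being the excluded resonance $\zeta$ attached to $\lambda=1/2$), we have $\xi\neq q$ for $\rho$ in a right neighborhood of $0$ and may divide out $(\xi-q)$. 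Hence $(u,\xi)$ solves the \emph{undivided} Helmholtz weak form \eqref{variational}, in which the only coefficient that is not manifestly real is $\epsilon_P^{-1}(\xi)=\xi/(\xi-q)$ on $P$.

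The heart of the argument is to test \eqref{variational} against $\tilde v=u^d$ and take imaginary parts. Writing $\xi=\sigma+i\delta$, every term except the one supported on $P$ carries a real coefficient multiplying a nonnegative quantity, while the $P$-term contributes $\mathrm{Im}\,\bigl(\xi/(\xi-q)\bigr)\int_P|\nabla u^d|^2$, and a direct computation gives $\mathrm{Im}\,\bigl(\xi/(\xi-q)\bigr)=-q\delta/|\xi-q|^2$. Collecting the imaginary parts of \eqref{variational} therefore yields an identity of the form
\[
-\frac{q\,\delta}{|\xi-q|^{2}}\int_P|\nabla u^d|^2 \;=\; \rho^2\,\delta\int_Y|u^d|^2 .
\]
Suppose $\delta\neq 0$. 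Dividing by $\delta$ leaves a nonpositive left-hand side (because $q>0$) equal to a strictly positive right-hand side (because $\rho>0$ and $u^d\neq0$, its leading part being $\underline u_0\psi_0=\underline u_0\neq0$ on $Y\setminus R$). This contradiction forces $\delta=0$, so $\xi$ is real for all small real $\rho>0$. At $\rho=0$ the value $\xi_0^n$ is real by construction, since $I_n$ is a real interval; thus $\xi=\xi(\rho)$ is real on some $[0,\varepsilon)$.

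Finally I would upgrade this to the coefficients. By Theorem \ref{convergenceofxi} the map $\rho\mapsto\xi=\xi_0^n+\sum_{l\ge1}(\rho\tau)^l\xi_l^n$ is given by a power series converging on a complex disk about $\rho=0$, hence is analytic there, while the preceding step shows it is real-valued on the real segment $[0,\varepsilon)$. Comparing $\xi(\rho)$ with $\overline{\xi(\overline\rho)}$, two analytic functions agreeing on a set with an accumulation point, gives $\overline{\xi_l^n}=\xi_l^n$, and since $\tau$ is real each $\xi_l^n$ is real. The step I expect to be most delicate is precisely the reduction to the undivided form \eqref{variational}: testing the \emph{divided} form \eqref{variational2} with $v=u$ only produces a quadratic identity with real coefficients, whose roots could a priori form a complex-conjugate pair, so it is essential to restore the factor $\xi/(\xi-q)$ whose imaginary part carries the definite sign that drives the contradiction. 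One must also verify $\xi\neq q$ along the branch so that the division is legitimate, and note that the leading order $\rho=0$ is genuinely exceptional: there $\nabla\psi_0$ vanishes on $P$ and the imaginary-part identity degenerates, which is why reality of $\xi_0^n$ is read off from the construction of $I_n$ rather than from this computation.
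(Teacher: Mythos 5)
Your proof is correct, but it takes a genuinely different route from the paper's. The paper's argument is a one-liner: set $v=u$ in \eqref{variational2} and observe that the resulting relation is a quadratic in $\xi$ whose coefficients are the manifestly real quantities $\eta^2\int_Y|u|^2$, $\tau^2\int_H|(\nabla+i\eta\hat\kappa)u|^2$, etc.; a check that the discriminant is nonnegative (it equals $(t-s)^2+2(t+s)\tau^2A_P+\tau^4A_P^2$ with $t=\eta^2 q\int_Y|u|^2$, $s=\tau^2A_H+\eta^2A_R$, all nonnegative) then forces both roots, hence $\xi$, to be real. You instead undo the multiplication by $\tau^2(\xi-q)$ to return to the undivided form \eqref{variational}, test with $u^d$, and take imaginary parts, so that the only complex coefficients are $\xi/(\xi-q)$ on $P$ and $\xi$ on the right, whose imaginary parts have opposite definite signs; this is essentially the same algebra as the discriminant computation (for a real-coefficient quadratic, $\mathrm{Im}\,\xi\neq0$ forces the discriminant to be negative), but your version makes the sign structure transparent without computing the discriminant, at the modest cost of having to check $\xi\neq q$ along the branch, which you do. You also supply two steps the paper leaves implicit: the nondegeneracy $u^d\neq0$ for small $\rho$ (needed in either argument to know the quadratic is nontrivial), and the passage from reality of the function $\rho\mapsto\xi(\rho)$ on a real interval to reality of each Taylor coefficient $\xi_l^n$ via the identity theorem applied to $\xi(\rho)-\overline{\xi(\overline\rho)}$. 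Both arguments are valid; yours is longer but more self-contained, while the paper's is shorter but relies on the unproved assertion about the discriminant and on the unexplained coefficient-extraction step.
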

\begin{proof}
Setting $v=u$ in (\ref{variational2}) delivers a quadratic equation for $\xi$. Since the discriminant greater than zero, we conclude that $\xi$ is real and it follows that $\xi_m$, $m=1,\ldots$, are real. 
\end{proof}
\noindent With these results in hand, Theorems \ref{convergenceofxi} and \ref{summable} now follow from the convergence established in section \ref{convergence} together with Theorem \ref {Existencesequence}.

\section{Acknowledgments}
This research is supported by NSF grant DMS-0807265, DMS-1211066, AFOSR grant FA9550-05-0008, AFOSR MURI Grant FA95510-12-1-0489 administered through the University of New Mexico, and NSF EPSCOR Cooperative Agreement No. EPS-1003897 with additional support from the Louisiana Board of Regents.

\end{document}